\pdfoutput=1
\documentclass{amsart}
\usepackage{
 amsmath, 
 amsxtra, 
 amsthm, 
 amssymb, 
 etex, 
 mathrsfs, 
 mathtools, 
 tikz-cd, 
 xr}
 \usepackage{hyperref}

 \usepackage{dsfont}


 \usepackage[utf8]{inputenc}
 \usepackage[T1]{fontenc}
 
 
\usepackage{helvet}
\usepackage{color}

\usepackage{pifont}
\usepackage[bbgreekl]{mathbbol}
\usepackage{amsfonts}
\usepackage{mathrsfs}

\usepackage[all]{xy}

\usepackage{comment}

\newtheorem{theorem}{Theorem}[section]
\newtheorem{lemma}[theorem]{Lemma}

\newtheorem{proposition}[theorem]{Proposition}
\newtheorem{corollary}[theorem]{Corollary}
\newtheorem{defn}[theorem]{Definition}
\newtheorem{example}[theorem]{Example}

\newtheorem{notation}[theorem]{Notation}

\newtheorem{lthm}{Theorem} 

\theoremstyle{remark}
\newtheorem{remark}[theorem]{Remark}

\setlength{\parskip}{.5\baselineskip}

\newcommand{\Tr}{\operatorname{Tr}}
\newcommand{\Gal}{\operatorname{Gal}}
\newcommand{\Fil}{\operatorname{Fil}}
\newcommand{\Sym}{\operatorname{Sym}}

\newcommand{\DD}{\mathbb{D}}

\newcommand{\QQ}{\mathbb{Q}}
\newcommand{\Qp}{\mathbb{Q}_p}
\newcommand{\Zp}{\mathbb{Z}_p}
\newcommand{\ZZ}{\mathbb{Z}}

\newcommand{\FFF}{\mathcal{F}}

\newcommand{\vp}{\varphi}

\newcommand{\cO}{\mathcal{O}}
\newcommand{\cN}{\mathcal{N}}

\newcommand{\col}{\mathrm{Col}}

\newcommand{\cH}{\mathcal{H}}

\newcommand{\LL}{\Lambda}
\newcommand{\TT}{\mathbb{T}}

\newcommand{\lra}{\longrightarrow}
\newcommand{\ra}{\rightarrow}
\newcommand{\res}{\textup{res}}
\newcommand{\cs}{\clubsuit}
\newcommand{\cor}{\mathrm{cor}}

\newcommand{\BF}{\textup{BF}}

\newcommand{\Dcris}{\DD_{\rm cris}}
\newcommand{\cE}{\mathcal{E}}

\begin{document}

\title{Rank--two Euler systems for symmetric squares}

\begin{abstract}
Let $p\ge 7$ be a prime number and $f$ a normalized eigen-newform with good reduction at $p$ such that its $p$-th Fourier coefficient vanishes. We construct a rank-two Euler system attached to the $p$-adic realization of the symmetric square motive of $f$. Furthermore, we show that the non-triviality is guaranteed by the non-vanishing of the leading term of the relevant $L$-value and the non-vanishing of a certain $p$-adic period modulo $p$.
\end{abstract}
\author[K. B\"uy\"ukboduk]{K\^az\i m B\"uy\"ukboduk}
\address[B\"uy\"ukboduk]{UCD School of Mathematics and Statistics\\ University College Dublin\\ Ireland}
\email{kazim.buyukboduk@ucd.ie}

\author[A. Lei]{Antonio Lei}
\address[Lei]{D\'epartement de Math\'ematiques et de Statistique\\
Universit\'e Laval, Pavillion Alexandre-Vachon\\
1045 Avenue de la M\'edecine\\
Qu\'ebec, QC\\
Canada G1V 0A6}
\email{antonio.lei@mat.ulaval.ca}

\subjclass[2010]{11R23 (primary); 11F11, 11R20 (secondary) }
\keywords{Iwasawa theory, elliptic modular forms, symmetric square representations, non-ordinary primes}
\maketitle
\tableofcontents

\section{Introduction}
The main goal of this article is to construct  a rank-two Euler system in the setting of \cite{BLV}. We fix forever a prime number $p \geq 7$ as well as embeddings $\iota_\infty:\overline{\QQ}\hookrightarrow \mathbb{C}$ and $\iota_p:\overline{\QQ}\hookrightarrow \mathbb{C}_p$. Let  $f$ be a normalised,  cuspidal new eigenform of weight $k + 2$, level $N$ and nebentype $\epsilon_f$. Throughout this article, we assume that {$p> k+1$}, $p \nmid N$ and $a_p(f) = 0$. We  write $\pm\alpha$ for the roots of the Hecke polynomial $X^2+\epsilon_f(p)p^{k+1}$ of $f$ at $p$.

Let $L/\QQ$ be a number field containing the Hecke field of $f$ as well as the $N$-th roots of unity. Let $\mathfrak{p}$ be a prime in $L$ above $p$. We denote by $E$ the completion of $L$ at $\mathfrak{p}$. Let $\cO$ denote the ring of integers of $E$ and write $W_f$ for Deligne's $E$-linear $G_\QQ$-representation   attached to $f$. Since $p>k+1$ by assumption, there exists a canonical Galois-stable $\cO$-lattice $R_f$ inside $W_f$ (c.f. \cite[\S14.22]{kato04} as well as \cite{FM}). Let us fix an even $E$-valued Dirichlet character $\chi$ of conductor $N_\chi$,  which we assume to be coprime to $p$. We will be interested in the $G_\QQ$-representations
\[
T:=\Sym^2R_f^*(1-j+\chi),\quad V:=\Sym^2 W_f^*(1-j+\chi)=T\otimes_{\Zp}\Qp,
\]
where $j\in [1,k+1]$ is an even integer.

 Let $\Gamma=\Gal(\QQ(\mu_{p^\infty})/\QQ)$. We write $\Gamma = \Gamma_{\text{tors}} \times \Gamma_{1}$, where $\Gamma_{\text{tors}}$ is a finite group of order $p-1$ and $\Gamma_{1} = \Gal(\QQ(\mu_{p^\infty})/\QQ(\mu_{p}))$. We fix a topological generator $\gamma$ of $\Gamma_{1}$, which in turn determines an isomorphism $\Gamma_{1} \cong \ZZ_p$.
 
 Let $\Lambda_\cO(\Gamma)$ denote the Iwasawa algebra $\cO[[\Gamma]]$ (and similarly, $\Lambda_\cO(\Gamma_1)$ the Iwasawa algebra $\cO[[\Gamma_1]]$). We write $\TT$ for the cyclotomic deformation
 \[
 \TT:=T\otimes_\cO \Lambda_\cO(\Gamma_1)^\iota,
 \]
where $\Lambda_{\cO}(\Gamma_1)^{\iota}$ denotes the free rank-one $\Lambda_{\cO}(\Gamma_1)$-module on which $G_{\QQ}$ acts via the inverse of the canonical character $G_{\QQ} \twoheadrightarrow \Gamma_1 \hookrightarrow \Lambda_{\cO}(\Gamma_1)^{\times}$. We similarly define $\Lambda_{\cO}(\Gamma)^{\iota}$. If $r\ge0$ is a real number, $\cH_{E,r}(\Gamma)$ denotes the algebra of  $E$-valued  tempered distributions of order $r$ on $\Gamma$. We define $\cH_{E,r}(\Gamma)^\iota$ in a manner similar to above.

Given an integer $r\ge1$, we write $\QQ(r)$ for the maximal $p$-extension of $\QQ$ inside $\QQ(\mu_r)$.  {Let $\mathcal{N}_{\chi}$ denote the set of positive square-free integers whose prime factors are prime to $6pNN_{\chi}$ and satisfy the conditions as specified in Definition~\ref{def:Kolyvaginprimesfortwists} below.}

As explained in \cite[\S8]{LLZ1}, conjectures of Perrin-Riou in \cite{pr-es} predict the existence of a rank-two Euler system for the representation $V$. A rank-two Euler system is a family of classes
\[
z_1^{(r)}\wedge z_2^{(r)}\in \bigwedge^2 H^1(\QQ(r),\TT),\quad r\in \mathcal{N}_{\chi},
\]
satisfying certain norm relations under the corestriction map as $r$ varies. In \cite{LZ1}, Loeffler and Zerbes constructed families of Beilinson-Flach elements attached to $T$. These classes live inside $H^1(\QQ(r),\TT\hat\otimes \cH_{E,k+1}(\Gamma))$ and as such, they are not integral. However, they do satisfy the Euler system norm relations. One expects that these classes are shadows of a (bounded) rank-two Euler system; in more precise terms, one expects that these unbounded classes arise from a rank-two Euler system via appropriate Perrin-Riou functionals. Evidence towards this was given in \cite{BLLV,BLV}.

 In this article, with the aid of ``signed'' Beilinson-Flach elements constructed in  \cite{BLV}, we give a rather explicit construction of a rank-two Euler system for $T$   under the following  hypotheses.
\begin{itemize}
\item[($\Psi_1$)] There exists $u \in (\ZZ/NN_\chi\ZZ)^\times$ such that $\epsilon_f\chi(u) \not\equiv \pm1\, (\textup{mod}\, \mathfrak{p})$ and $\chi(u)$ is a square modulo $\mathfrak{p}$. 
\item[($\Psi_2$)] $\epsilon_f\chi(p)\neq 1$ and $\phi(N)\phi(N_\chi)$ is coprime to $p$, where $\phi$ is Euler's totient function. 
\item[\textup{(\textbf{Im})}] $\textup{im}\left(G_\QQ\ra \textup{Aut}(R_f\otimes\QQ_p)\right)$ contains a conjugate of $\textup{SL}_2(\ZZ_p)$. 
\item[\textup{(\textbf{SD})}] The dual canonical Selmer group $H^1_{\mathcal{F}_{\textup{can}}^*}(\QQ,T^\vee(1))=0$ vanishes. 
\end{itemize}
Here, $T^\vee$ denotes the Pontryagin dual of $T$. Also, see Definition~\ref{def:canonicalselmerstructure} below, where we define the canonical Selmer structure and its dual.

\begin{lthm}[Corollary~\ref{cor:ESrank2}]\label{Thm_A}
Under the hypotheses $(\Psi_1)$, $(\Psi_2)$, $($\textbf{\textup{Im}}$)$ and $($\textbf{\textup{SD}}$)$, a non-zero rank-two Euler system for $T$ exists. In more precise terms, there exists a non-trivial collection 
$z^{(r)}\in \bigwedge^2 H^1(\QQ(r),\TT)$ as $r$ varies in $\mathcal{N}_{\chi}$ such that, for every $r\ell \in \mathcal{N}_{\chi}$ $($with $\ell\in \mathcal{N}_{\chi}$ a prime$)$ we have
$${\rm cor}_{\QQ(r\ell)/\QQ(r)}^{\otimes 2}(z^{(r\ell)})=P_\ell({\rm Fr}_\ell)\,z^{(r)}$$
where ${\rm Fr}_\ell$ is the geometrically normalised Frobenius at the prime $\ell$ and $P_\ell(X)=\det \left(1-{\rm Fr}_\ell X\vert V^*(1)\right)$ is the $\ell$-local Euler polynomial for $V^*(1):={\rm Hom}(V,\QQ_p(1))$.
\end{lthm}
\begin{remark}
The rank-two Euler system with the indicated  norm relation is deduced from the rank-two Euler system given by Corollary~\ref{cor:ESrank2} below 
  as follows: The Euler system of Corollary~\ref{cor:ESrank2} satisfies a norm relation  given by $Q_\ell(\ell^{j}{\rm Fr}_\ell)$, where $Q_\ell(X)$ is the Hecke polynomial for the Rankin-Selberg convolution $f\otimes f\otimes \chi^{-1}$ at the prime $\ell$. This is a degree $4$ poynomial and it factors as 
 $$Q_\ell(X)=(1-\ell^{k-1}\chi^{-1}\epsilon_f(\ell)X)\,P_\ell(\ell^{-j}X)\,.$$
 We can modify this Euler system following the proof of \cite[Theorem 5.3.3]{LZ2}, so that the new collection of cohomology classes satisfy the required norm relation.
\end{remark}
The hypotheses $(\Psi_1)$, $(\Psi_2)$ and $($\textbf{Im}$)$ of Theorem~\ref{Thm_A} were already present in \cite{BLV}. The hypothesis (\textbf{SD}) allows us to show that the corestriction map 
$$H^1(\QQ(r),\TT)\lra H^1(\QQ(r'),\TT)$$ 
is surjective for whenever $r'|r$ (see Proposition~\ref{prop:horizontalcontrol} below). In \S\ref{sec_trivialityofSelmer}, we give explicit criteria for (\textbf{SD}) to hold. One of them is in terms of Perrin-Riou's $p$-adic period and the leading term of the relevant $L$-function (Theorem~\ref{THM_nonzeromodperiodimpliesBFmodpnonzero}) and the other is in terms of the Rankin-Selberg $p$-adic $L$-function (Lemma~\ref{lem:NumericalCriterion}). 

Perrin-Riou's $p$-adic Beilinson conjecture further predicts a link between twists of the rank-two Euler system we construct and the second order derivative of  appropriate $L$-functions. This seems far out of reach with the current technology. That said,  the analogous  problem in the context of absolute Hodge cohomology (namely, Beilinson's conjectures themselves) seems more accessible and we will pursue this direction in future work.

\section{Beilinson-Flach elements and Equivariant $p$-adic $L$-functions}

 For $\lambda,\mu\in\{\pm\alpha\}$, $c > 1$ coprime to $6Np$, $m \ge 1$ coprime to $pc$, and $a\in (\ZZ / mp^\infty \ZZ)^\times$, let
   \[
    {}_c\BF^{\lambda,\mu}_{m, a} \in
    H^1(\QQ(\mu_m),W_{f}^* \otimes W_{f}^*\otimes\cH_{E,k+1}(\Gamma)^\iota)
   \]
   be the Beilinson--Flach element constructed in \cite[Theorem 5.4.2]{LZ1}. We shall take $a=1$ throughout and omit it from the notation. Let $\mathcal{R}_{\chi}$ denote the set of positive square-free integers prime to $6pcNN_{\chi}$. For $m \in \mathcal{R}_{\chi}$, we may consider $\chi$ as a continuous character of $\Gal(\QQ(\mu_{mN_{\chi}p^\infty})/\QQ) \cong(\ZZ/mN_{\chi}p^{\infty}\ZZ)^{\times}$.

For a square-free integer $m$ prime to $p$, we write $\QQ(m)$ for the maximal pro-$p$-extension of $\QQ$ contained in $\QQ(\mu_m)$. The Galois group of $\QQ(m)/\QQ$ is denoted by $\Delta_m$.

\begin{defn}
\label{define:twistedBF}
 For all $m \in \mathcal{R}_{\chi}$, we define the Beilinson-Flach class twisted by $\chi$
  $${}_{c}\BF_{m,\chi}^{\lambda,\mu}\in H^1(\QQ(m),W_{f}^* \otimes W_{f}^*(1 -j+ \chi)\otimes\cH_{E,k+1}(\Gamma)^\iota)$$ 
  by setting it as the image of ${}_{c}\BF_{m}^{\lambda,\mu}$ under the chain of maps
  \begin{align*}H^1(\QQ(\mu_{mN_\chi}),W_{f}^* \otimes W_{f}^*\otimes\cH_{E,k+1}(\Gamma)^\iota)&\cong H^1(\QQ(\mu_{mN_\chi}),W_{f}^* \otimes W_{f}^*(1 -j+ \chi)\otimes\cH_{E,k+1}(\Gamma)^\iota)\\
  &\stackrel{\textup{cor}}{\lra}H^1(\QQ(m),W_{f}^* \otimes W_{f}^*(1-j + \chi)\otimes\cH_{E,k+1}(\Gamma)^\iota)\\
&{\lra}H^1(\QQ(m),W_{f}^* \otimes W_{f}^*(1-j + \chi)\otimes\cH_{E,k+1}(\Gamma)^\iota)^+
  \end{align*}
  where the superscript $+$ stands for the $+1$-eigenspace under the action of the complex conjugation. 
  \end{defn}
We recall from \cite[Proposition~2.1.3 and Corollary~3.3.6]{BLV} that our choice of $\chi$ implies  that for $\lambda,\mu\in\{\alpha,-\alpha\}$, we have
\[
{}_{c}\BF_{m,\chi}^{\lambda,\lambda}, {}_{c}\BF_{m,\chi}^{\lambda,\mu}+{}_{c}\BF_{m,\chi}^{\mu,\lambda}\in H^1(\QQ(m),V\otimes \cH_{E,k+1}(\Gamma)^\iota)^{+},
\]
and 
\[
{}_{c}\BF_{m,\chi}^{\lambda,\mu}={}_{c}\BF_{m,\chi}^{\mu,\lambda}.
\]

From now on, we fix a choice of $c$ and discard the subscript $c$ from the notation (see \cite[Remark~2.2.8]{BLV}).

\section{Selmer group computations}Recall that $T=\Sym^2R_f^*(1-j+\chi)$ and $\TT=T\otimes_\cO \Lambda_\cO(\Gamma_1)^\iota$.
In this section, we consider the following set of Kolyvagin primes for the representation $T$:

\begin{defn}
\label{def:Kolyvaginprimesfortwists}
Let $\mathcal{P}_{\chi}$ denote the set of primes $\ell \nmid 6pcNN_{\chi}$ for which we have
\begin{itemize}
\item $\ell \equiv 1\mbox{ mod }p$,
\item $T/(\mathrm{Frob}_{\ell} - 1)T$ is a free $\cO$-module of rank one,
\item $\mathrm{Frob}_{\ell} - 1$ is bijective on $\bigwedge^2R_f^*(1-j+\chi)$.
\end{itemize}
We let $\mathcal{N}_{\chi}$ denote the set of square-free  integers whose prime factors all lie in $\mathcal{P}_{\chi}$.
\end{defn}
For each $r\in \cN_\chi$, we let $\LL_r$ denote the ring $\cO[[\Delta_r\times\Gamma_1]]$.
\begin{theorem}
\label{thm:mainSelmerstructure}For all $r\in\cN_\chi$, the $\LL_r$-module $H^1(\QQ(r),\TT)$ is free of rank $2$.
\end{theorem}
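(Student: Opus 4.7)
The plan is to show that $R\Gamma(\mathbb{Q}(r), \mathbb{T})$, viewed as a perfect complex of $\Lambda_r$-modules of Tor-amplitude $[0,2]$ (a standard fact for $p$ odd and $S$-ramified cohomology over a number field), is cohomologically concentrated in degree $1$; its $H^1$ will then automatically be a finitely generated projective $\Lambda_r$-module. Since $\Lambda_r = \cO[[\Delta_r \times \Gamma_1]]$ is a local ring (its maximal ideal is generated by a uniformizer of $\cO$ together with the augmentation ideals of $\Delta_r$ and $\Gamma_1$), projective forces free. The rank will then follow from a generic specialization via Tate's global Euler-Poincar\'e formula. It therefore suffices to establish $H^0(\QQ(r), \mathbb{T}) = 0 = H^2(\QQ(r), \mathbb{T})$.

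The vanishing of $H^0$ follows cleanly from $(\textbf{Im})$: the image of $G_\QQ$ in $\mathrm{Aut}(\overline{R_f})$ contains $\mathrm{SL}_2(\mathbb{F})$ with $\mathbb{F} = \cO/\mathfrak{p}$, which is perfect for $p \geq 7$. Since $\Gal(\QQ(r)(\mu_{p^\infty})/\QQ) \cong \Delta_r \times \Gamma$ is abelian, the subgroup $G_{\QQ(r)(\mu_{p^\infty})}$ contains the commutator $[G_\QQ, G_\QQ]$, whose image in $\mathrm{Aut}(\overline{R_f})$ still contains $\mathrm{SL}_2(\mathbb{F})$. Hence $\Sym^2 \overline{R_f}^*$ is irreducible on restriction, $\overline{\mathbb{T}}^{G_{\QQ(r)}} = 0$, and topological Nakayama gives $\mathbb{T}^{G_{\QQ(r)}} = 0$.

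The main step, and the hardest part, is the vanishing of $H^2(\QQ(r), \mathbb{T})$. The essential input is $(\textbf{SD})$: $H^1_{\mathcal{F}_{\textup{can}}^*}(\QQ, T^\vee(1)) = 0$. Via global Poitou--Tate duality applied to the cyclotomic deformation, this transfers to a vanishing for $H^2(\QQ, \mathbb{T})$, the local contributions at primes $\ell \mid r$ being harmless by the defining conditions on $\mathcal{P}_\chi$ (in particular, bijectivity of $\mathrm{Frob}_\ell - 1$ on $\bigwedge^2 R_f^*(1-j+\chi)$ kills the local unramified cohomology at $\ell$). To propagate the vanishing from $\QQ$ to $\QQ(r)$, the plan is to use the Hochschild--Serre spectral sequence attached to the abelian $p$-extension $\QQ(r)/\QQ$: since $H^0(\QQ(r), \mathbb{T}) = 0$ by the previous paragraph, the offending $E_2$-terms involving $H^0$ collapse, and the horizontal control principle (Proposition~\ref{prop:horizontalcontrol}) packages precisely this descent.

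With both vanishings in hand, $R\Gamma(\QQ(r), \mathbb{T}) \simeq H^1(\QQ(r), \mathbb{T})[-1]$, so $H^1$ is projective, hence free, over $\Lambda_r$. For the rank, note that $\QQ(r)$ is totally real: its degree over $\QQ$ is an odd $p$-power, forcing $\QQ(r) \subset \QQ(\mu_r)^+$. Consequently every character $\eta$ of $\Delta_r$ is even and $d_-(T \otimes \eta) = d_-(T)$. The signature of $T = \Sym^2 R_f^*(1-j+\chi)$ equals $(1,2)$, because the $(2,1)$-signature of $\Sym^2 R_f^*$ is flipped by the twist (the character $\chi_{\mathrm{cyc}}^{1-j}\chi$ sends complex conjugation to $-1$, as $j$ is even and $\chi$ is even). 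Thus $d_-(T) = 2$, and each $\eta$-specialization of $H^1(\QQ(r), \mathbb{T})$ has generic $\cO_\eta[[\Gamma_1]]$-rank $2$; combined with freeness, this yields that $H^1(\QQ(r), \mathbb{T})$ is free of rank $2$ over $\Lambda_r$.
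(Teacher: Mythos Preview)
Your argument contains a genuine gap at the $H^2$-vanishing step. From the hypothesis \textup{(\textbf{SD})}, Poitou--Tate duality (combined with Mazur--Rubin control as in Proposition~\ref{prop:horizontalcontrol}) gives you only that the \emph{Tate--Shafarevich} part $\Sha^2(\QQ(r),\TT):=\ker\big(H^2(\QQ(r),\TT)\to\bigoplus_{v}H^2(\QQ(r)_v,\TT)\big)$ vanishes, since its Pontryagin dual is the strict Selmer group $H^1_{\mathcal{F}_{\textup{can}}^*}(\QQ(r),\TT^\vee(1))$. It does \emph{not} give $H^2(\QQ(r),\TT)=0$. The local terms $H^2(\QQ(r)_v,\TT)\cong\big((T^\vee(1))^{I_v}\big)^\vee$ at primes $v\mid NN_\chi$ are typically nonzero (the inertia invariants of a ramified representation need not vanish), and you do not address them; you only discuss primes $\ell\mid r$. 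So the complex $R\Gamma(\QQ(r),\TT)$ is generally \emph{not} concentrated in degree $1$, and your route to projectivity of $H^1$ breaks down.

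There is also a structural issue: the paper proves Theorem~\ref{thm:mainSelmerstructure} \emph{without} assuming \textup{(\textbf{SD})}; that hypothesis is imposed only later (after Proposition~\ref{prop:horizontalcontrol}) to obtain horizontally compatible bases. Your proof, even if patched, would establish a strictly weaker statement. The paper's approach, deferring to \cite[Theorem~2.3.20]{BLV}, is quite different: it uses the element $\tau$ supplied by $(\Psi_1)$ and \textup{(\textbf{Im})} together with the defining properties of Kolyvagin primes in $\mathcal{P}_\chi$ to control $H^1(\QQ(r),\TT)\otimes_{\Lambda_r}\Lambda_r/\mathfrak{m}$ directly via a Chebotarev-type argument, bounding its $\mathbb{F}$-dimension by $2$ and then invoking Nakayama. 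No vanishing of $H^2$ is required; one only needs that $H^2$ is $\Lambda_r$-torsion (weak Leopoldt, which does hold here) to pin down the rank, together with torsion-freeness of $H^1$ from $H^0=0$. Your computations of $H^0$ and of $d_-(T)=2$ are fine and would slot into that argument.
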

\begin{proof}
All the references in this proof are to \cite{BLV}, unless otherwise stated. The assertion is essentially Theorem~2.3.20, after minor modifications. We recall the twists $T_{\chi,j}:=T\otimes\omega^j$ of $R$ that we have utilized in op. cit. Notice that Proposition 2.3.2 remains valid when $T_{\chi,j}$ is replaced with $T$ with the same $\tau \in G_\QQ$, since we have $\omega(\tau)=1$ for $\tau$ verifying the conclusions of this proposition. Moreover, it is easy to see that the set of Kolyvagin primes $\mathcal{P}_\chi$ given as above is identical to the set of Kolyvagin primes for $T_{\chi,j}$ (given as in Definition 2.3.9 of op. cit.). Moreover, as in Remark~2.3.11, the existence of $\tau$ supplies us with a large set of choices for Kolyvagin primes for $T$. Using these facts,  the proofs in Section 2.3 of op. cit. go through in verbatim, with $T$ replacing $T_{\chi,j}$.
\end{proof}
\label{s:hori}

\begin{notation}
  For a module $M$, we denote its Pontryagin dual, $\textup{Hom}_{\textup{cts}}(M,\QQ_p/\ZZ_p)$, by $M^{\vee}$.
\end{notation}
\begin{defn}
\label{def:canonicalselmerstructure}
Let $K$ be any number field. Given an arbitrary free $\cO$-module $M$ of finite rank that is endowed with a continuous $G_K$-action unramified outside a finite set of places of $K$, we let $\mathcal{F}_{\textup{can}}$ denote the canonical Selmer structure on $M$ $($or $M\otimes_{\ZZ_p}\QQ_p$$)$, given as in \cite[Definition 3.2.1]{mr02}. We also let $\mathcal{F}_{\textup{can}}^*$ denote the dual Selmer structure on $M^\vee(1)$ $($or on $(M\otimes_{\ZZ_p}\QQ_p)^*(1)$$)$, defined as in Section 2.3 of op. cit. 

We shall also write $\mathcal{F}_{\textup{can}}$ for the Selmer structure on $\mathbb{M}:=M\otimes\Lambda_{\cO}(\Gamma)^\iota$, denoted by $\mathcal{F}_\LL$ in Section 5.3 of loc. cit. We shall write $\mathcal{F}_{\textup{can}}^*$  for the Selmer structure $\mathcal{F}_\LL^*$ of loc. cit. on the Galois representation $\mathbb{M}^\vee(1)$.
\end{defn}

Theorem~\ref{thm:mainSelmerstructure} tells us that $H^1(\QQ(r),\TT)$ is free over $\Lambda_r$. In order to choose compatible bases (as $r$ varies) for the collection of modules $H^1(\QQ(r),\TT)$, we would like to know whether there exists $R \in \mathcal{N}$ with the property that
$$\bigcap_{r\in \mathcal{N}} \textup{im}\left(H^1(\QQ(r),\TT) \stackrel{\textup{cor}}{\lra} H^1(\QQ,\TT)\right)=\textup{im}\left(H^1(\QQ(R),\TT) \stackrel{\textup{cor}}{\lra} H^1(\QQ,\TT)\right)\,.$$
We may give an affirmative answer to this question under mild hypothesis:
\begin{proposition}
\label{prop:horizontalcontrol}
Suppose that there exists a character $\rho$ of $\Gamma$ (that might be of infinite order) such that $H^1_{\mathcal{F}_{\textup{can}}^*}(\QQ,(T\otimes\rho)^*)=0$. Then for every $r,r^\prime \in \mathcal{N}_\chi$ with $r^\prime \mid r$, the corestriction map
$$H^1(\QQ(r),\TT)\lra H^1(\QQ(r^\prime),\TT)$$
is surjective.
\end{proposition}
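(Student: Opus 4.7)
The plan is to recast surjectivity of corestriction as the vanishing of a connecting map in cohomology over $\QQ$ via Shapiro's lemma, and then control the relevant $H^2$ terms using Iwasawa-theoretic Poitou-Tate duality and the hypothesis.

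First, by filtering $\QQ(r)/\QQ(r')$ through a chain of single-prime extensions $r' = r_0 \mid r_1 \mid \dots \mid r_k = r$ with each $r_{i+1}/r_i \in \mathcal{P}_\chi$, and using that a composition of surjections is surjective, it suffices to treat the case $r = r'\ell$ for a single Kolyvagin prime $\ell$, in which case $H := \Gal(\QQ(r)/\QQ(r')) \cong \ZZ/p\ZZ$. Shapiro's lemma identifies $H^i(\QQ(r), \TT) \cong H^i(\QQ, T \otimes_\cO \LL_r^\iota)$ and $H^i(\QQ(r'), \TT) \cong H^i(\QQ, T \otimes_\cO \LL_{r'}^\iota)$, with corestriction induced by the $G_\QQ$-equivariant surjection $T \otimes \LL_r^\iota \twoheadrightarrow T \otimes \LL_{r'}^\iota$. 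Writing $N$ for its kernel, the resulting long exact sequence shows that surjectivity of corestriction is equivalent to the vanishing of the connecting map
\[
\delta\colon H^1(\QQ, T \otimes \LL_{r'}^\iota) \to H^2(\QQ, N).
\]

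Since $\LL_r \cong \LL_{r'}[H]$ as $\LL_{r'}$-algebras, the kernel $N$ is isomorphic to $(T \otimes \LL_{r'}^\iota) \otimes_\cO I_H$, where $I_H$ is the augmentation ideal of $\cO[H]$, and it carries an $I_H$-adic filtration whose graded pieces are of the form $T \otimes \LL_{r'}^\iota(\eta)$ for characters $\eta$ of $H$. By d\'evissage it suffices to show $H^2(\QQ, T \otimes \LL_{r'}^\iota(\eta)) = 0$ for each such $\eta$, which by Iwasawa-theoretic Poitou-Tate duality reduces (modulo local contributions at bad primes that are controlled under \textbf{(Im)}) to the vanishing of the dual Selmer group $H^1_{\mathcal{F}_{\textup{can}}^*}(\QQ, (T \otimes \LL_{r'}^\iota(\eta))^\vee(1))$. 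The hypothesis supplies this vanishing at the single specialization $\rho$, and a Nakayama / control-theorem argument --- exploiting that the Iwasawa-theoretic dual Selmer $H^1_{\mathcal{F}_{\textup{can}}^*}(\QQ, \TT^\vee(1))$ is a torsion $\Lambda_\cO(\Gamma)$-module under our standing hypotheses --- propagates this vanishing to all the twists by characters $\eta$ of $H$.

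The main obstacle will be this last propagation step: leveraging the vanishing at a single (possibly infinite-order) specialization $\rho$ of $\Gamma$ to conclude vanishing at all the finite-order twists $\eta$ of $H$ appearing in the filtration of $N$. This requires a careful use of the structure of the characteristic ideal of the Iwasawa-theoretic dual Selmer group, combined with the freedom in the choice of $\rho$ (which may be taken suitably generic among the characters of $\Gamma$).
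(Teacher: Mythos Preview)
Your overall strategy converges with the paper's at the right endpoint --- vanishing of the Pontryagin dual of the dual Selmer group, established by Nakayama --- but your route there is over-engineered and the final ``propagation'' step is not correct as written.

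The paper's argument is much shorter. By Nekov\'a\v{r} duality, the cokernel of $H^1(\QQ(r),\TT)\to H^1(\QQ(r'),\TT)$ is identified directly with $H^1_{\mathcal{F}_{\textup{can}}^*}(\QQ(r),\TT^*)^\vee[\mathcal{A}_{r,r'}]$. A control lemma of Mazur--Rubin then shows that the quotient of $H^1_{\mathcal{F}_{\textup{can}}^*}(\QQ(r),\TT^*)^\vee$ by the ideal $(\mathcal{A}_{r,r'},\,\rho^{-1}(\gamma)\gamma-1)$ is $H^1_{\mathcal{F}_{\textup{can}}^*}(\QQ,(T\otimes\rho)^*)^\vee$, which vanishes by hypothesis. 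Since this ideal lies in the Jacobson radical of $\LL_r$, Nakayama annihilates the entire $\LL_r$-module $H^1_{\mathcal{F}_{\textup{can}}^*}(\QQ(r),\TT^*)^\vee$, and in particular its $\mathcal{A}_{r,r'}$-torsion. No reduction to single primes, no d\'evissage, no Poitou--Tate bookkeeping of local terms.

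Two concrete problems with your version. First, since $H\cong\ZZ/p\ZZ$ is a $p$-group in residue characteristic $p$, the $I_H$-adic graded pieces of $I_H$ are all \emph{trivial} $\cO[H]$-modules; there is no genuine family of nontrivial characters $\eta$ of $H$ to d\'evisse over, so that step collapses to a single case anyway. Second, and more seriously, your last paragraph proposes to ``propagate'' the vanishing by invoking ``the freedom in the choice of $\rho$ (which may be taken suitably generic)'' together with the characteristic ideal of the Iwasawa-theoretic dual Selmer group. But the hypothesis gives you \emph{one specific} $\rho$; you have no freedom to move it. The argument you are gesturing at (choose $\rho$ off the support of the characteristic ideal) simply is not available, and appealing to torsion-ness of $H^1_{\mathcal{F}_{\textup{can}}^*}(\QQ,\TT^\vee(1))$ as an input is circular, since that is a consequence of the very vanishing you are trying to prove. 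What actually works is precisely the paper's move: the single vanishing at the given $\rho$, fed into Nakayama over the local ring $\LL_r$, kills the whole module in one stroke --- there is no propagation from one character to another.
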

\begin{proof}
 Fix $r \in \mathcal{N}_\chi$ and let $L\subset \QQ(r)$ denote any subfield. Set $\Delta_F:=\Gal(F/\QQ)$ and let $\mathcal{A}_{L} \subset \cO[\Delta_L]$ denote the augmentation ideal. We first verify that $H^1_{\mathcal{F}_{\textup{can}}^*}(L,\TT^\vee(1))=0$. It follows from \cite[Lemma 3.5.3]{mr02}\footnote{This lemma applies since the residual representation $\overline{T}$ is absolutely irreducible and we have $\overline{T}^{G_\QQ}=(\overline{T}^\vee(1))^{G_\QQ}=0$. Note that  these two conditions are all that are required in the proof of \cite[Lemma 3.5.3]{mr02}.} that for every character $\rho$ of $\Gamma$,
$$H^1_{\mathcal{F}_{\textup{can}}^*}(L,\TT^\vee(1))^\vee\big{/}(\mathcal{A}_{L},\rho^{-1}(\gamma)\gamma-1)\cong H^1_{\mathcal{F}_{\textup{can}}^*}(\QQ,(T\otimes\rho)^\vee(1))^\vee\,.$$
By assumption, we may choose  $\rho$ so that $H^1_{\mathcal{F}_{\textup{can}}^*}(\QQ,(T\otimes\rho)^\vee(1))^\vee=0$ and the required vanishing of $H^1_{\mathcal{F}_{\textup{can}}^*}(L,\TT^\vee(1))^\vee$ follows from Nakayama's lemma.

Let now $\QQ(r^\prime)=F_0\subset\cdots\subset F_s=\QQ(r)$ denote a chain of field extensions such that $F_i/F_{i-1}$ is cyclic for $i=1,\cdots, s$. Let us set $\Delta^{(i)}:=\Gal(F_i/F_{i-1})$ and let us write $\mathcal{A}^{(i)}\subset \cO[\Delta^{(i)}]$ for the augmentation ideal. We have an isomorphism
$${\rm coker}\left(H^1(F_i,\TT)\lra H^1(F_{i-1},\TT)\right)\stackrel{\sim}{\lra} H^2(F_i,\TT)[\mathcal{A}^{(i)}]\,.$$
By \cite[(8.9.6.2)]{nekovar06}, we also have a Matlis duality isomorphism $H^1_{\mathcal{F}_{\textup{can}}^*}(F_i,\TT^\vee(1))^\vee\cong H^2(F_i,\TT)$ and the proof follows from the vanishing we have established in the first paragraph.\end{proof}
We henceforth assume that 
\begin{equation}
\label{eqn:vanishingofstrictselmer}
H^1_{\mathcal{F}_{\textup{can}}^*}(\QQ,T^*)=0.
\end{equation}
In other words, the hypothesis (\textbf{SD}) in the introduction holds. Furthermore, Proposition~\ref{prop:horizontalcontrol} applies.
\begin{defn}
A collection $\left\{c_1^{(r)},c_2^{(r)}\right\}_{r\in \mathcal{N}_\chi}$ (where $\{c_1^{(r)},c_2^{(r)}\}$ is a $\LL_r$ basis for $H^1(\QQ(r),\TT)$) is said to be \emph{horizontally compatible} if 
$$\textup{cor}_{\QQ(r)/\QQ(r^\prime)}^{\otimes2}\left(c_1^{(r)}\wedge c_2^{(r)} \right)=c_1^{(r^\prime)}\wedge c_2^{(r^\prime)}$$ for  every $r^\prime\mid r$.
\end{defn}
{The surjectivity of the corestriction maps given by Proposition~\ref{prop:horizontalcontrol} allows us to choose a basis in $H^1(\QQ(r),\TT)$ for each $r$, forming a  horizontally compatible collection}. We once and for all fix one and denote it by  $\mathscr{C}:=\left\{c_1^{(r)},c_2^{(r)}\right\}_{r\in \mathcal{N}_\chi}$.

\section{Signed Iwasawa Theory}
For a finite unramified extension $F$ of $\Qp$ and  $\cs \in \{+,-, \bullet\}$, recall from \cite[\S3.2-\S3.4]{BLV} that we have defined the signed Coleman map
\[
\col^\cs_{F}:H^1 (F,\TT)\rightarrow\cO_{F}\otimes_{\Zp}\Lambda_\cO(\Gamma_1).
\]
These maps are norm compatible, that is, if $F'/F$ is a finite unramified extension, then
\[
\col^\cs_{F}\circ\cor_{F'/F}=\Tr_{F'/F}\circ \col^\cs_{F'},
\]
as a consequence of the corresponding property of Perrin-Riou's big logarithm map (c.f. the proof of Proposition~ 4.3 and Proposition~4.12 in \cite{LZ-IJNT}). Via a choice of basis of the Yager module or a normal basis of the residue field, we may identify $\cO_{F}$ with $\Zp[\Gal(F/\Qp)]$ (c.f. \cite[Proposition~3.2 and Remark~3.3]{LZ-IJNT}). Then, we may consider $\col^\cs_{F}$ as a map landing in $\cO[[\Gal(F/\Qp)\times\Gamma_1]]$.

Fix $r \in \mathcal{N}_\chi$ and let $v_1,\ldots, v_t$ be the places of $\QQ(r)$ lying above $p$. For each $i$, may identify $\QQ(r)_{v_i}$ with a finite unramified extension of $\Qp$, say $F$, which is independent of $i$. Suppose that $v_i=\sigma_i(v_1)$, where $\sigma_i\in\Delta_r$. Let $D$ be the decomposition group of $v_1$ in $\Delta_r$ (which is isomorphic to $\Gal(F/\Qp)$).  Then, $\Delta_r=\sqcup \sigma_i D$, which allows us to identify $\Lambda_r$ with $\sqcup_i\cO[[ D\times \Gamma_1]]$. Let $H^1(\QQ(r)_p,\TT)=\bigoplus_{v|p}H^1(\QQ(r)_v,\TT)$. We may define the semi-local signed Coleman map
\[
\col^\cs_r=\bigoplus_{v|p} \col^\cs_{\QQ(r)_v}: H^1(\QQ(r)_p,\TT)\rightarrow \cO[[\Delta_r\times \Gamma_1]].
\]
Furthermore, we have the following signed objects attached to this map:

\begin{itemize}
\item[(a)] The Selmer structures $\mathcal{F}_\cs$ (resp., the dual Selmer structure $\mathcal{F}_\cs^*$) on  $\TT$ (resp., on  $\TT^*$)  given by the local conditions at $p$ determined by $H^1_\cs(\QQ(r)_p,\TT):=\ker\left(\col_r^\cs\right)$ (resp., by $\ker\left(\col_r^\cs\right)^\perp$) and the conditions outside $p$ agree with those given by the canonical Selmer structure.
\item[(b)] The signed Beilinson-Flach classes $\{C\times\BF_{r,\chi}^{\cs}\}$ with $C\times\BF_{r,\chi}^{\cs} \in H^1_{\mathcal{F}_\clubsuit} (\QQ(r),\TT)$, where $C$ is a non-zero constant independent of $\cs$ and $r$ (c.f. \cite[Proposition~3.4.2]{BLV}).
\end{itemize}

\begin{proposition}
\label{prop:thenonvanishingsignedclass}
There exists a choice of $\cs\in  \{+,-,\bullet\}$ so that $\BF_{1,\chi}^\cs \neq 0$.
\end{proposition}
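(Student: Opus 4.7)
The plan is to argue by contradiction, leveraging the fact that the signed classes $\BF_{1,\chi}^\cs$ collectively repackage the unbounded Beilinson--Flach classes ${}_c\BF_{1,\chi}^{\lambda,\mu}$, together with the explicit reciprocity law which identifies a suitable projection of the latter with a non-trivial $p$-adic $L$-function.

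First, I would unpack the construction of the signed classes from \cite{BLV}. Recall that for $a_p(f)=0$ the logarithm matrix of $R_f$ (and hence of $\Sym^2 R_f^*$) has entries in the distribution algebra $\cH_{E,k+1}(\Gamma)$ and is invertible over the fraction field. Tracking the construction in \cite[\S3]{BLV}, the signed Beilinson--Flach classes are obtained as the preimages of ${}_c\BF_{1,\chi}^{\lambda,\mu}$ under this logarithm matrix, so that there is an identity of the shape
\[
{}_c\BF_{1,\chi}^{\lambda,\mu} \,=\, \sum_{\cs\in\{+,-,\bullet\}} M^{\cs}_{\lambda,\mu}\,\BF_{1,\chi}^{\cs}
\]
inside $H^1(\QQ,V\otimes\cH_{E,k+1}(\Gamma)^\iota)$, where $M^{\cs}_{\lambda,\mu}\in\cH_{E,k+1}(\Gamma)$ are the appropriate entries of the logarithm matrix. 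Consequently, if $\BF_{1,\chi}^{\cs}=0$ for every $\cs\in\{+,-,\bullet\}$, then ${}_c\BF_{1,\chi}^{\lambda,\mu}=0$ for every choice of $\lambda,\mu\in\{\pm\alpha\}$.

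Next I would show that at least one of the unbounded classes ${}_c\BF_{1,\chi}^{\lambda,\mu}$ is non-zero, which yields a contradiction. For this I would invoke the explicit reciprocity law of Loeffler--Zerbes \cite[\S6]{LZ1} (recorded in the symmetric-square setting in \cite{BLV}), which computes the image of the localization at $p$ of ${}_c\BF_{1,\chi}^{\lambda,\mu}$ under Perrin-Riou's big logarithm and a suitable projection to a one-dimensional quotient of $\Dcris$ as an explicit non-zero multiple of the Rankin--Selberg $p$-adic $L$-function $L_p^{\lambda,\mu}(f\otimes f,\chi)$. The non-vanishing of this $p$-adic $L$-function is obtained by specializing at a character in the region of absolute convergence of the classical Rankin--Selberg Euler product, where the resulting $L$-value is non-zero.

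The main obstacle is to guarantee that the chosen specialization is actually realized in the interpolation range of the $p$-adic $L$-function and that the Euler-type fudge factors coming from the $\lambda,\mu$-stabilizations and the twist by $1-j+\chi$ do not degenerate to zero; the hypothesis $\epsilon_f\chi(p)\neq 1$ in $(\Psi_2)$ plus $p\ge 2k+3$ are precisely what is needed to rule out such degeneracies. Once this is verified for at least one pair $(\lambda,\mu)$, the displayed identity above forces at least one $\BF_{1,\chi}^{\cs}$ to be non-zero, completing the proof.
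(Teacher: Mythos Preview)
Your overall strategy is sound and is indeed what the proof of \cite[Proposition~3.5.4]{BLV} (which the paper simply cites) carries out: the factorisation of the unbounded Beilinson--Flach classes through the logarithm matrix reduces the question to the non-vanishing of at least one ${}_c\BF_{1,\chi}^{\lambda,\mu}$, and that is detected via the explicit reciprocity law against the geometric $p$-adic $L$-function.

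There is, however, a genuine gap in your final step. In the present \emph{equal-weight} situation (both $f$ and $f\otimes\chi^{-1}$ have weight $k+2$), the interpolation range of the one-variable geometric $p$-adic $L$-function consists of characters $\chi_{\rm cyc}^{j'}\theta$ with $j'$ in the critical window $[1,k+1]$ and $\theta$ of finite $p$-power order, whereas the region of absolute convergence of the archimedean Rankin--Selberg Euler product is $\mathrm{Re}(s)>k+2$. These do not overlap, so you cannot ``specialize at a character in the region of absolute convergence'' to force non-vanishing; that trick is available only in the strictly unequal-weight setting. The non-vanishing of $L_p(f^{\lambda},f^{\lambda}\otimes\chi^{-1})$ must instead be obtained by other means---for example via Shimura's non-vanishing of Rankin--Selberg $L$-values at the edge of the critical strip, or more directly from the construction of the $p$-adic $L$-function as a $p$-adic Petersson product of $f^{\lambda}$ against a non-trivial $\Lambda$-adic form built from $f^{\mu}\otimes\chi^{-1}$ and the Eisenstein family. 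Once you substitute one of these for the absolute-convergence claim, your argument goes through and matches the paper's cited proof.
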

\begin{proof}
This is given by the proof of Proposition~3.5.4 in \cite{BLV}.
\end{proof}
\begin{theorem}
\label{thm:structureofsignedSelmergroups}
Let $\cs$ be as in Proposition~\ref{prop:thenonvanishingsignedclass}. The Selmer group $H^1_{\mathcal{F}_\cs^*} (\QQ,\TT^\vee(1))$ is $\LL$-cotorsion and $H^1_{\mathcal{F}_\cs} (\QQ,\TT)$ is torsion free of rank one.
\end{theorem}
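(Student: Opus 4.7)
The plan is to combine an Euler system bound, applied to the signed Beilinson-Flach elements, with a global duality / Euler characteristic computation for the signed Selmer structure $\mathcal{F}_\cs$. This parallels the strategy used in \cite{BLV} for the twisted representations $T_{\chi,j}$, and most of the technical ingredients we need are already in place there.

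First, I would verify that the collection $\{\BF_{r,\chi}^\cs\}_{r\in\cN_\chi}$ forms a bounded Euler system for $(\TT,\mathcal{F}_\cs)$. The norm-compatibility across levels descends from the corresponding relations for the unbounded Beilinson-Flach classes of Loeffler-Zerbes, because the signed Coleman maps $\col_F^\cs$ are Galois-equivariant and norm-compatible, as recalled at the start of \S4; the resulting classes lie in $H^1_{\mathcal{F}_\cs}(\QQ(r),\TT)$ by the very definition of the local condition as $\ker(\col_r^\cs)$, and non-triviality at level $1$ is Proposition~\ref{prop:thenonvanishingsignedclass}. I would then convert this Euler system into a non-trivial Kolyvagin system via the machinery of \cite{mr02}, using the set of Kolyvagin primes $\cP_\chi$ from Definition~\ref{def:Kolyvaginprimesfortwists}, and apply the standard Euler system bound. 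This yields a divisibility of $\LL$-characteristic ideals
\[
\Char_\LL H^1_{\mathcal{F}_\cs^*}(\QQ,\TT^\vee(1))^\vee \;\big|\; \Char_\LL\bigl(\LL/\col_1^\cs(\BF_{1,\chi}^\cs)\cdot\LL\bigr),
\]
which in particular forces $H^1_{\mathcal{F}_\cs^*}(\QQ,\TT^\vee(1))$ to be $\LL$-cotorsion.

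For the second assertion, I would invoke the Poitou-Tate / Nekov\'a\v{r}-style global duality exact sequence attached to the Selmer structure $\mathcal{F}_\cs$. The core rank of $\mathcal{F}_\cs$ equals one, because the signed local condition at $p$ is of rank one by construction of $\col_r^\cs$; this is the analogue in the present setting of the core-rank computation carried out in \cite[\S2.3]{BLV}. Combined with the cotorsion conclusion of the previous paragraph, the Euler characteristic formula pins the $\LL$-rank of $H^1_{\mathcal{F}_\cs}(\QQ,\TT)$ to exactly one. Torsion-freeness then follows from the vanishing $H^0(\QQ(\mu_{p^\infty}),T)=0$, which is an immediate consequence of hypothesis (\textbf{Im}).

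The main obstacle is the verification that the Mazur-Rubin Euler/Kolyvagin system formalism carries over faithfully to this \emph{signed} setting, where the local condition at $p$ is neither an unramified nor an ordinary (Greenberg) condition but is cut out by the kernel of a Coleman-type map. Both the core-rank computation and the construction of a non-trivial Kolyvagin system from the signed Beilinson-Flach Euler system require care at this point; however, the necessary adaptations have essentially been carried out in \cite[\S2.3, \S3]{BLV}, and the same arguments should apply verbatim here (as was already the case for Theorem~\ref{thm:mainSelmerstructure}), with $T$ replacing $T_{\chi,j}$ throughout.
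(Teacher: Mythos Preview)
Your approach is essentially the same as the paper's: Euler system machinery applied to the non-trivial signed Beilinson--Flach classes yields the cotorsion statement, and then Poitou--Tate global duality together with torsion-freeness of the ambient Iwasawa cohomology gives the rank-one, torsion-free statement.

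One slip worth flagging: the displayed divisibility you write down cannot be right as stated, since by construction $\BF_{1,\chi}^\cs \in H^1_{\mathcal{F}_\cs}(\QQ,\TT) = \ker(\col_1^\cs\circ\res_p)$, so $\col_1^\cs(\BF_{1,\chi}^\cs)=0$ and the right-hand side is vacuous. The Euler system bound in this signed setting (as in \cite{BLV}) instead bounds the characteristic ideal of the dual Selmer group by the index of $\BF_{1,\chi}^\cs$ inside $H^1_{\mathcal{F}_\cs}(\QQ,\TT)$ (or equivalently, by an appropriate signed $p$-adic $L$-value via a \emph{different} Coleman map). Fortunately you only use this divisibility to conclude cotorsion, and for that the mere non-triviality of $\BF_{1,\chi}^\cs$ suffices, so the argument survives.
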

\begin{proof}
The first part follows from \cite[Theorem 5.3.6]{mr02} applied with the Euler system $\left\{\BF_{r,\chi}^{\cs}\right\}$ and Proposition~\ref{prop:thenonvanishingsignedclass}, whereas the second part is an immediate consequence of the first part together with Poitou-Tate global duality. We note that $H^1_{\mathcal{F}_\cs} (\QQ,\TT)$ is torsion free since the Iwasawa cohomology $H^1(\QQ,\TT)$ is thanks to Theorem~\ref{thm:mainSelmerstructure}.
\end{proof}

For the rest of the article, we fix a choice of $\cs\in\{+,-,\bullet\}$ that satisfies Proposition~\ref{prop:thenonvanishingsignedclass}.

\begin{defn}For all $r\in\cN_\chi$,
 we let $\mathscr{F}_r$ denote the compositum of the arrows
$$H^1(\QQ(r),\TT)\stackrel{\res_p}{\lra} H^1(\QQ(r)_p,\TT)\stackrel{\col_r^\cs}{\lra} \LL_r\,.$$
We also define the $\LL_r$-morphism
$$\mathscr{G}_r: \bigwedge^2 H^1(\QQ(r),\TT)\lra H^1(\QQ(r),\TT)$$ 
by setting 
$$\mathscr{G}_r(z_1\wedge z_2):=\mathscr{F}_r(z_1)z_2-\mathscr{F}_r(z_2)z_1\,.$$
\end{defn}
Recall the horizontally compatible bases $\mathscr{C}:=\left\{c_1^{(r)},c_2^{(r)}\right\}_{r\in \mathcal{N}_\chi}$ from \S\ref{s:hori}.
\begin{proposition}
\label{prop:Frisnotidenticallyzerooncr}
For all $r\in\cN_\chi$, we have either $\mathscr{F}_r\left(c_1^{(r)}\right)\neq 0$ or $\mathscr{F}_r\left(c_2^{(r)}\right)\neq 0$.
\end{proposition}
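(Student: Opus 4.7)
The plan is to argue by contradiction and descend from level $r$ to the base level $r=1$. Suppose that $\mathscr{F}_r(c_1^{(r)})=0=\mathscr{F}_r(c_2^{(r)})$; I will deduce that $\mathscr{F}_1\equiv 0$ on $H^1(\QQ,\TT)$, contradicting the rank count at the base. Indeed, Theorem~\ref{thm:mainSelmerstructure} applied with $r=1$ shows that $H^1(\QQ,\TT)$ is free of rank two over $\Lambda_\cO(\Gamma_1)$, while Theorem~\ref{thm:structureofsignedSelmergroups} identifies $H^1_{\mathcal{F}_\cs}(\QQ,\TT)=\ker\mathscr{F}_1$ as a submodule of rank one. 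Consequently $\mathscr{F}_1$ is non-zero, and cannot vanish on every element of any $\Lambda_\cO(\Gamma_1)$-basis of $H^1(\QQ,\TT)$.

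The next step is to establish corestriction compatibility for $\mathscr{F}_\bullet$. Write $\pi_r\colon \Lambda_r \twoheadrightarrow \Lambda_\cO(\Gamma_1)$ for the map induced by the natural surjection $\Delta_r\to 1$. The semi-local decomposition $\Lambda_r\cong \bigoplus_i \cO[[D\times\Gamma_1]]$ recorded in \S4, combined with the norm-compatibility $\col^\cs_F\circ \cor_{F'/F}=\Tr_{F'/F}\circ\col^\cs_{F'}$ (applied with $F'=\QQ(r)_v$ and $F=\QQ_p$ for each $v\mid p$ and summed over the cosets $\sigma_i D$), should yield the identity
\[
\pi_r\circ \mathscr{F}_r=\mathscr{F}_1\circ \cor_{\QQ(r)/\QQ},
\]
using also the standard compatibility $\res_p\circ \cor_{\QQ(r)/\QQ}=\cor_{\QQ(r)_p/\QQ_p}\circ\res_p$ of restriction and corestriction.

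Finally, horizontal compatibility of the fixed collection $\mathscr{C}$ gives $\cor_{\QQ(r)/\QQ}(c_1^{(r)})\wedge \cor_{\QQ(r)/\QQ}(c_2^{(r)})=c_1^{(1)}\wedge c_2^{(1)}$ in the free rank-one $\Lambda_\cO(\Gamma_1)$-module $\bigwedge^2 H^1(\QQ,\TT)$. Hence the change-of-basis matrix from $\{c_j^{(1)}\}$ to $\{\cor_{\QQ(r)/\QQ}(c_i^{(r)})\}_{i=1,2}$ has determinant $1$, so the latter set is itself a $\Lambda_\cO(\Gamma_1)$-basis of $H^1(\QQ,\TT)$. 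Under the contradiction hypothesis, the displayed compatibility gives $\mathscr{F}_1(\cor_{\QQ(r)/\QQ}(c_i^{(r)}))=\pi_r(\mathscr{F}_r(c_i^{(r)}))=0$ for $i=1,2$, so $\mathscr{F}_1$ vanishes on a basis and hence on all of $H^1(\QQ,\TT)$, contradicting the first paragraph. The main obstacle I anticipate is the bookkeeping in the middle step: promoting the local norm-compatibility of $\col^\cs$ to the clean $\Lambda_r$-linear identity above requires carefully tracking the Yager-module identifications used to define each $\col^\cs_{\QQ(r)_v}$ and the decomposition of $\Delta_r$ into cosets of $D$; conceptually, however, this is formal from the ingredients already in \S4.
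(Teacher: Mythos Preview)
Your argument is correct and follows the same underlying strategy as the paper: assume both $\mathscr{F}_r(c_i^{(r)})$ vanish, descend to level $1$ using the trace-compatibility of the Coleman maps, and derive a contradiction with the rank-one statement of Theorem~\ref{thm:structureofsignedSelmergroups}.

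The packaging is slightly different. The paper does not invoke the horizontally compatible collection $\mathscr{C}$ at all: it simply observes that $\mathscr{F}_r\equiv 0$ forces $H^1_{\mathcal{F}_\cs}(\QQ(r),\TT)=H^1(\QQ(r),\TT)$, and then uses the trace-compatibility of $\col^\cs$ to show that corestriction maps the coinvariants $H^1_{\mathcal{F}_\cs}(\QQ(r),\TT)/\mathcal{A}_r$ injectively into $H^1_{\mathcal{F}_\cs}(\QQ,\TT)$, exhibiting a free rank-two submodule there. You instead use the defining property of $\mathscr{C}$ to identify $\{\cor_{\QQ(r)/\QQ}(c_i^{(r)})\}$ as a $\Lambda_\cO(\Gamma_1)$-basis of $H^1(\QQ,\TT)$, and then apply the identity $\pi_r\circ\mathscr{F}_r=\mathscr{F}_1\circ\cor$ to kill $\mathscr{F}_1$ on that basis. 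Both routes are equivalent and use the same inputs; yours makes the role of $\mathscr{C}$ explicit, while the paper's is independent of any choice of basis.
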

\begin{proof}
If we had 
$$\mathscr{F}_r\left(c_1^{(r)}\right)=\mathscr{F}_r\left(c_2^{(r)}\right)= 0$$ 
it would follow that the map $\mathscr{F}_r$ is identically zero. This in turn implies that 
$$H^1_{\mathcal{F}_\cs}(\QQ(r),\TT)=H^1(\QQ(r),\TT)$$
is a free $\LL_r$-module of rank $2$. Using the trace-compatibility of the Coleman maps $\col_r^{\cs}$ as $r$ varies, we observe that the corestriction map induces an injection 
$$H^1_{\mathcal{F}_\cs}(\QQ(r),\TT)/\mathcal{A}_r\cdot H^1_{\mathcal{F}_\cs}(\QQ(r),\TT) \hookrightarrow H^1_{\mathcal{F}_\cs}(\QQ,\TT),$$
where $\mathcal{A}_{r} \subset \cO[\Delta_r]$ is the augmentation ideal. This shows that the module $H^1_{\mathcal{F}_\cs}(\QQ,\TT)$ contains a free $\LL_\cO(\Gamma)$-module of rank $2$, contradicting Theorem~\ref{thm:structureofsignedSelmergroups}. \end{proof}

\begin{corollary}
\label{cor:Grinjective}
The $\LL_r$-homomorphism $\mathscr{G}_r$ is injective.
\end{corollary}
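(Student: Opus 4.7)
The plan is to reduce the injectivity of $\mathscr{G}_r$ to the non-vanishing statement of Proposition~\ref{prop:Frisnotidenticallyzerooncr}, leveraging the freeness of $H^1(\QQ(r),\TT)$ over $\LL_r$ from Theorem~\ref{thm:mainSelmerstructure}. Since $H^1(\QQ(r),\TT)$ is free of rank two over $\LL_r$ with $\LL_r$-basis $\{c_1^{(r)},c_2^{(r)}\}$, the source $\bigwedge^2 H^1(\QQ(r),\TT)$ of $\mathscr{G}_r$ is free of rank one, generated by $c_1^{(r)}\wedge c_2^{(r)}$. Thus $\mathscr{G}_r$ is injective if and only if the image element
$$v := \mathscr{G}_r\bigl(c_1^{(r)}\wedge c_2^{(r)}\bigr) = \mathscr{F}_r\bigl(c_1^{(r)}\bigr)\, c_2^{(r)} - \mathscr{F}_r\bigl(c_2^{(r)}\bigr)\, c_1^{(r)}$$
has trivial $\LL_r$-annihilator. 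Reading $v$ in the basis $\{c_1^{(r)},c_2^{(r)}\}$, this annihilator equals $\mathrm{Ann}_{\LL_r}(\mathscr{F}_r(c_1^{(r)}))\cap \mathrm{Ann}_{\LL_r}(\mathscr{F}_r(c_2^{(r)}))$, and Proposition~\ref{prop:Frisnotidenticallyzerooncr} gives immediately that $v\neq 0$.

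To promote $v\neq 0$ to the vanishing of its full annihilator, I would use that $\LL_r=\cO[[\Delta_r\times\Gamma_1]]$ is a reduced Noetherian ring: because $\Delta_r$ is a finite abelian $p$-group and $E$ has characteristic zero, $E[\Delta_r]$ is semisimple by Maschke, so $\LL_r$ sits as an $\cO$-order inside a finite product of classical Iwasawa algebras $\cO_{\psi}[[\Gamma_1]]$ indexed by Galois orbits of characters $\psi$ of $\Delta_r$. Since $\LL_r$ is reduced, a nonzero common annihilator $\lambda$ of $\mathscr{F}_r(c_1^{(r)})$ and $\mathscr{F}_r(c_2^{(r)})$ would have to lie outside some minimal prime $\mathfrak{p}_\psi$ of $\LL_r$, and this would force both $\mathscr{F}_r(c_i^{(r)})\in \mathfrak{p}_\psi$ for $i=1,2$; equivalently, $\mathscr{F}_r$ would vanish identically on the $\psi$-specialization of $H^1(\QQ(r),\TT)$.

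The final step is to rule out this $\psi$-vanishing by running the argument of Proposition~\ref{prop:Frisnotidenticallyzerooncr} after specializing along $\mathfrak{p}_\psi$. Trace compatibility of the Coleman maps $\col^\cs$ descends through the specialization by $\psi$, and vanishing of $\mathscr{F}_r$ modulo $\mathfrak{p}_\psi$ would produce a free rank-two $\LL_r/\mathfrak{p}_\psi$-submodule inside the $\psi$-twisted signed Selmer group over $\QQ$. This is incompatible with the character-twisted analogue of Theorem~\ref{thm:structureofsignedSelmergroups}, which bounds the relevant signed Selmer rank by one via the Euler system machinery of \cite{BLV} applied to the $\psi$-projection of $\{\BF^{\cs}_{r,\chi}\}$. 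The principal obstacle is checking that this rank bound persists for every character $\psi$ of $\Delta_r$, which amounts to the non-vanishing of the $\psi$-projection of the signed Beilinson--Flach class; this follows essentially verbatim from Proposition~\ref{prop:thenonvanishingsignedclass} and the constructions of \cite{BLV}, and is the only place where character-twisted input beyond the $\psi=1$ case is required.
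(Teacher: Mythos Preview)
The paper's argument is much shorter than yours: it observes that the source $\bigwedge^2 H^1(\QQ(r),\TT)$ is free of rank one and the target $H^1(\QQ(r),\TT)$ is torsion-free (indeed free of rank two, by Theorem~\ref{thm:mainSelmerstructure}), so that injectivity reduces to showing $\mathscr{G}_r\neq 0$, which follows at once from Proposition~\ref{prop:Frisnotidenticallyzerooncr}. You are right to notice that $\LL_r$ is not a domain when $r>1$, so the implication ``nonzero $\Rightarrow$ injective'' for a map $\LL_r\to\LL_r^2$ is not automatic, and the paper is terse on this point; your reduction to checking that the ideal $(\mathscr{F}_r(c_1^{(r)}),\mathscr{F}_r(c_2^{(r)}))$ is not contained in any minimal prime $\mathfrak{p}_\psi$ is the correct reformulation.

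However, your proposed completion has a genuine gap. To rule out containment in $\mathfrak{p}_\psi$ you invoke a $\psi$-twisted analogue of Theorem~\ref{thm:structureofsignedSelmergroups} for \emph{every} character $\psi$ of $\Delta_r$, and as you yourself note, this rests on the non-vanishing of the $\psi$-projection of the signed Beilinson--Flach class. Proposition~\ref{prop:thenonvanishingsignedclass} only asserts $\BF_{1,\chi}^{\cs}\neq 0$, i.e.\ non-vanishing at the trivial character, and neither the paper nor \cite{BLV} supplies the analogous statement for arbitrary $\psi$; your claim that it ``follows essentially verbatim'' is not substantiated. Non-vanishing of every $\psi$-component amounts to the non-vanishing of an entire family of twisted $p$-adic $L$-values and is a substantially stronger hypothesis than anything assumed here. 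So while your route is more scrupulous about the ring-theoretic subtlety, it replaces the paper's brief step with a much larger unproved input; as written, the argument does not close.
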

\begin{proof}
Since the $\LL_r$-module $\bigwedge^2 H^1(\QQ(r),\TT)$ is free of rank $1$ and as the target module $H^1(\QQ(r),\TT)$ is torsion-free (by Theorem~\ref{thm:mainSelmerstructure}), we only need to verify the map $\mathscr{G}_r$ is not identically zero. 

Recall the basis $\left\{c_1^{(r)},c_2^{(r)}\right\}$ of $H^1(\QQ(r),\TT)$ and suppose that  $\mathscr{G}_r$ is the zero map. This in particular means that 
$$\mathscr{G}_r(c_1^{(r)}\wedge c_2^{(r)}):=\mathscr{F}_r(c_1^{(r)})\cdot c_2^{(r)}-\mathscr{F}_r(c_2^{(r)})\cdot c_1^{(r)}=0$$
and therefore that 
$$\mathscr{F}_r(c_1^{(r)})=\mathscr{F}_r(c_2^{(r)})=0\,.$$
This contradicts Proposition~\ref{prop:Frisnotidenticallyzerooncr}.
\end{proof}

\begin{lemma}
$\textup{im}\left(\mathscr{G}_r\right) \subset H^1_{\mathcal{F}_\cs} (\QQ(r),\TT)$.
\end{lemma}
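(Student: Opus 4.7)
The plan is to observe that the inclusion reduces to a one-line computation based on commutativity of $\LL_r$ together with the $\LL_r$-linearity of $\col_r^\cs$ and $\res_p$. By construction, the signed Selmer structure $\mathcal{F}_\cs$ on $\TT$ differs from the canonical one only at the place $p$, where the local condition is cut out precisely by $\ker(\col_r^\cs) \subset H^1(\QQ(r)_p,\TT)$ via the semi-local signed Coleman map. Any class in the full Iwasawa cohomology $H^1(\QQ(r),\TT)$ automatically satisfies the canonical local conditions at the other places, so the content of the lemma is to show that, for any $z_1,z_2\in H^1(\QQ(r),\TT)$, the restriction $\res_p(\mathscr{G}_r(z_1\wedge z_2))$ lands in $\ker(\col_r^\cs)$.

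First I would unpack the definition of $\mathscr{G}_r$ and apply $\col_r^\cs\circ\res_p$ directly. Since $\res_p$ is $\LL_r$-linear and $\col_r^\cs$ is an $\LL_r$-module homomorphism, and since $\mathscr{F}_r = \col_r^\cs\circ\res_p$ by definition, one obtains
\[
\col_r^\cs\bigl(\res_p(\mathscr{G}_r(z_1\wedge z_2))\bigr) \;=\; \mathscr{F}_r(z_1)\,\mathscr{F}_r(z_2) - \mathscr{F}_r(z_2)\,\mathscr{F}_r(z_1),
\]
which vanishes because $\LL_r$ is commutative. Hence $\res_p(\mathscr{G}_r(z_1\wedge z_2))\in \ker(\col_r^\cs) = H^1_\bullet(\QQ(r)_p,\TT)$, which is exactly the local condition at $p$ that defines $H^1_{\mathcal{F}_\cs}(\QQ(r),\TT)$ inside $H^1(\QQ(r),\TT)$.

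There is essentially no obstacle here beyond a bookkeeping check: the construction $\mathscr{G}_r(z_1\wedge z_2) = \mathscr{F}_r(z_1)z_2 - \mathscr{F}_r(z_2)z_1$ is nothing but the standard ``determinant'' trick that manufactures classes in the kernel of a scalar functional, and it works here because $\mathscr{F}_r$ factors through the commutative ring $\LL_r$. The only mild subtlety is ensuring the compatibility of the $\LL_r$-module structures on $\bigwedge^2 H^1(\QQ(r),\TT)$, on $H^1(\QQ(r),\TT)$, and on $H^1(\QQ(r)_p,\TT)$ under $\res_p$, which is immediate from functoriality of Galois cohomology with $\LL_r$-coefficients.
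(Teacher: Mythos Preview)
Your argument is correct and is essentially the same as the paper's: both reduce the lemma to the identity $\mathscr{F}_r\circ\mathscr{G}_r(z_1\wedge z_2)=\mathscr{F}_r(z_1)\mathscr{F}_r(z_2)-\mathscr{F}_r(z_2)\mathscr{F}_r(z_1)=0$, and you have simply made explicit the bookkeeping that this vanishing places $\mathscr{G}_r(z_1\wedge z_2)$ in the $\cs$-Selmer group.
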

\begin{proof}
This follows from the observation that 
$$\mathscr{F}_r\circ\mathscr{G}_r(z_1\wedge z_2)=\mathscr{F}_r(z_1)\mathscr{F}_r(z_2)-\mathscr{F}_r(z_2)\mathscr{F}_r(z_1)=0\,.$$
\end{proof}
We shall need the following elementary linear algebra result in the proof of Theorem~\ref{thm:BFelementsareintheimage}.
\begin{lemma}
\label{lem:linearalgebra}
Suppose $R$ is any commutative ring and $M$ is a free $R$-module of rank $2$, with basis $\{u,v\}$. Let $a,b\in R$ and $K=R\cdot (au-bv)$ be the submodule of $M$ generated by $au-bv$. Let $N \subset M$ be a submodule that contains $K$ and for which we have $N\cap Rv=\{0\}$. Then $aN \subseteq K$.
\end{lemma}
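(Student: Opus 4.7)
The plan is a short direct calculation exploiting the given basis of $M$ together with the hypothesis $N \cap Rv = \{0\}$. Given an arbitrary element $n \in N$, I would write it in coordinates as $n = xu + yv$ with $x, y \in R$, and then exhibit $an$ explicitly as $x\cdot(au-bv)$, checking that the discrepancy vanishes.

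Concretely, I would compute
\[
an - x(au-bv) \;=\; (ax)u + (ay)v - (ax)u + (bx)v \;=\; (ay+bx)v,
\]
which lies in $Rv$ by inspection. On the other hand, $an\in N$ since $n\in N$ and $N$ is a submodule, while $x(au-bv)\in K\subseteq N$; hence the difference also belongs to $N$. Therefore $(ay+bx)v$ lies in $N\cap Rv$, which is zero by hypothesis. It follows that $an = x(au-bv)\in K$, and since $n\in N$ was arbitrary we conclude $aN\subseteq K$.

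The crucial use of the hypothesis $N\cap Rv=\{0\}$ is that it forces the $v$-coefficient of $an$ to line up automatically with the $v$-coefficient of the right scalar multiple of the generator $au-bv$. I do not anticipate any substantive obstacle: conceptually the lemma just says that the projection $M\twoheadrightarrow Rv$ annihilates $N$, so any $n\in N$ is determined by its $u$-coordinate, and multiplying by $a$ is precisely what is needed to fit it inside $K$.
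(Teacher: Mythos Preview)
Your proof is correct and is essentially the same argument as the paper's: the paper phrases it via the injection $\iota\colon N\hookrightarrow M/Rv$ (injective precisely because $N\cap Rv=\{0\}$) and checks $\iota(an)=\iota\big(x(au-bv)\big)$, which is exactly your computation that $an-x(au-bv)\in N\cap Rv$. One small slip in your closing commentary: the hypothesis $N\cap Rv=\{0\}$ does \emph{not} say that the coordinate projection $M\twoheadrightarrow Rv$ annihilates $N$ (e.g.\ $N=R(u+v)$), but rather that the quotient map $M\to M/Rv$ is injective on $N$; this does not affect the proof itself.
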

\begin{proof}
Let $\iota$ denote the natural injection $N \stackrel{\iota}{\hookrightarrow} M/Rv$ (given by $ n \mapsto n +Rv$), induced by the containment $N\subseteq M$; we shall denote its restriction to $K\subseteq N$ also by $\iota$. Let $x=su+tv \in N$ be any element. We then have 
$$\iota(ax)=\iota(sau)+\iota(tav)=s\iota(au)=s\iota(au-bv) \in \iota (K).$$
Since $\iota$ is injective, it follows that $ax \in K$ as desired.
\end{proof}
\begin{theorem}
\label{thm:BFelementsareintheimage}
For $i=1,2$ and $r\in \cN_\chi$, we have
$$\mathscr{F}_r\left(c_i^{(r)}\right)\cdot \BF_{r,\chi}^\cs \in \textup{im}\left(\mathscr{G}_r\right)\,.$$
\end{theorem}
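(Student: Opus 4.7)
The plan is to expand $\BF_{r,\chi}^\cs$ in the horizontally compatible basis $\mathscr{C}$ and then exploit the Selmer condition via a short computation. Since Theorem~\ref{thm:mainSelmerstructure} tells us that $H^1(\QQ(r),\TT)$ is free of rank two over $\LL_r$ with basis $\{c_1^{(r)},c_2^{(r)}\}$, I may write
\[
\BF_{r,\chi}^\cs = a_1\, c_1^{(r)} + a_2\, c_2^{(r)}, \qquad a_1,a_2\in\LL_r.
\]
Set $b_i := \mathscr{F}_r(c_i^{(r)})$ for $i=1,2$. The fact that $C\cdot\BF_{r,\chi}^\cs\in H^1_{\mathcal{F}_\cs}(\QQ(r),\TT)=\ker(\mathscr{F}_r)$, combined with $\LL_r$-regularity of $C$ (which holds as $C$ is a nonzero element of $\cO$ and $\LL_r$ is $\cO$-flat), gives $\mathscr{F}_r(\BF_{r,\chi}^\cs)=0$, hence the crucial relation
\[
a_1 b_1 + a_2 b_2 = 0 \qquad\text{in } \LL_r.
\]

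The desired conclusion is then essentially immediate. Substituting the relation above yields
\[
b_1\cdot\BF_{r,\chi}^\cs = a_1 b_1 c_1^{(r)} + a_2 b_1 c_2^{(r)} = -a_2 b_2 c_1^{(r)} + a_2 b_1 c_2^{(r)} = a_2\bigl(b_1 c_2^{(r)} - b_2 c_1^{(r)}\bigr) = a_2\cdot\mathscr{G}_r(c_1^{(r)}\wedge c_2^{(r)}),
\]
which lies in $\textup{im}(\mathscr{G}_r)$ by inspection; and the symmetric computation gives $b_2\cdot\BF_{r,\chi}^\cs = -a_1\cdot\mathscr{G}_r(c_1^{(r)}\wedge c_2^{(r)})\in\textup{im}(\mathscr{G}_r)$.

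One may equivalently package this as an application of Lemma~\ref{lem:linearalgebra}, taking $M=H^1(\QQ(r),\TT)$ with basis $u=c_2^{(r)}$, $v=c_1^{(r)}$, scalars $(a,b)=(b_1,b_2)$ (so that $K=\textup{im}(\mathscr{G}_r)$), and $N$ the submodule generated by $K$ together with $\BF_{r,\chi}^\cs$; the conclusion $aN\subseteq K$ then delivers $b_1\cdot\BF_{r,\chi}^\cs\in\textup{im}(\mathscr{G}_r)$, and the case of $b_2$ is symmetric. I do not anticipate any serious obstacle: all of the substantive input---freeness of $H^1(\QQ(r),\TT)$, existence of the horizontally compatible basis $\mathscr{C}$, and the integrality of the signed Beilinson--Flach class via $\col_r^\cs$---has been established in the preceding sections, so the remaining content is a two-line manipulation in a rank-two free module, with the only bookkeeping item being the invocation of $\cO$-flatness of $\LL_r$ to pass from $C\cdot\BF_{r,\chi}^\cs\in\ker(\mathscr{F}_r)$ to $\BF_{r,\chi}^\cs\in\ker(\mathscr{F}_r)$.
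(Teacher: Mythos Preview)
Your direct computation is correct and in fact cleaner than the paper's argument. The paper proceeds by a case analysis: in the case where both $\mathscr{F}_r(c_1^{(r)})$ and $\mathscr{F}_r(c_2^{(r)})$ are nonzero it invokes Lemma~\ref{lem:linearalgebra} with $N=H^1_{\mathcal{F}_\cs}(\QQ(r),\TT)$ (using that $\LL_r\cdot c_i^{(r)}\cap H^1_{\mathcal{F}_\cs}(\QQ(r),\TT)=\{0\}$), and in the degenerate case where one $\mathscr{F}_r(c_i^{(r)})$ vanishes it argues separately that $H^1_{\mathcal{F}_\cs}(\QQ(r),\TT)=\LL_r\cdot c_i^{(r)}$. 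Your approach bypasses all of this: once $\BF_{r,\chi}^\cs$ is written in the basis and the single relation $a_1b_1+a_2b_2=0$ is extracted from the Selmer condition, the containment is a one-line substitution, with no case split and no need to identify $H^1_{\mathcal{F}_\cs}$ explicitly. This is a genuine simplification.

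One small caveat: your proposed ``alternative packaging'' via Lemma~\ref{lem:linearalgebra}, with $N$ the module generated by $\textup{im}(\mathscr{G}_r)$ and $\BF_{r,\chi}^\cs$, does not quite work as stated, because the hypothesis $N\cap \LL_r\cdot c_1^{(r)}=\{0\}$ is not verified and can fail (e.g.\ if $b_1=0$ then $\textup{im}(\mathscr{G}_r)\subset \LL_r\cdot c_1^{(r)}$). This is harmless, since your direct computation already establishes the theorem; but you should drop that paragraph or replace it with the observation that the paper's own use of Lemma~\ref{lem:linearalgebra} takes $N$ to be the full Selmer group and requires the nonvanishing of both $b_i$, which is precisely why the paper needs the case analysis that your argument avoids.
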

\begin{proof}
We first consider the case when $\mathscr{F}_r\left(c_1^{(r)}\right)\neq 0\neq \mathscr{F}_r\left(c_2^{(r)}\right)$\,. In this situation, notice that we have 
$$\LL_r\cdot c_i^{(r)}\,\cap\, H^1_{\mathcal{F}_\cs}(\QQ(r),\TT)=\{0\}\,$$
(where $i=1,2$) and therefore a chain of containments of $\LL_r$-modules
\begin{align}\label{eqn:containments}
\textup{im}(\mathscr{G}_r)=\LL_r\cdot \left(\mathscr{F}_r(c^{(r)}_2)c_1^{(r)}-\mathscr{F}_r(c^{(r)}_1)c_2^{(r)}\right)&\subseteq H^1_{\mathcal{F}_\cs}(\QQ(r),\TT) \\
\notag&\hookrightarrow H^1(\QQ(r),\TT)/\LL_r\cdot c_i^{(r)}\,.
\end{align}
Let $j$ denote the element of $\{1,2\}$ other than $i$ (so that $\{i,j\}=\{1,2\}$). For each $c\in H^1_{\mathcal{F}_\cs}(\QQ(r),\TT)$, we infer from (\ref{eqn:containments}) and Lemma~\ref{lem:linearalgebra} that  
$$\mathscr{F}_r(c^{(r)}_j)\cdot c \in \textup{im}(\mathscr{G}_r)$$
and the proof of our theorem is complete in this case.

Suppose next that we have $\mathscr{F}_r\left(c_i^{(r)}\right)=0$ for some $i \in \{1,2\}$; say without loss of generality $i=1$. Observe that this is equivalent to saying that $c_1^{(r)} \in H^1_{\mathcal{F}_\cs}(\QQ(r),\TT)$. It follows from Proposition~\ref{prop:Frisnotidenticallyzerooncr} that $\mathscr{F}_r\left(c_2^{(r)}\right)\neq 0$, or in other words, that 
\begin{equation}\label{eqn:signedvsc2transversal}
\LL_r\cdot c_2^{(r)} \,\cap\, H^1_{\mathcal{F}_\cs}(\QQ(r),\TT) =\{0\}\,.
\end{equation}
Notice further that the assertion that  $\mathscr{F}_r(c_1^{(r)})\cdot \BF_{r,\chi}^\cs \in \textup{im}(\mathscr{G}_r)$ trivially holds and it remains to verify that 
\begin{equation}
\label{eqn:thedesiredcontainment}
\mathscr{F}_r(c_2^{(r)})\cdot \BF_{r,\chi}^\cs \in \textup{im}(\mathscr{G}_r)=\LL_r \left(\mathscr{F}_r(c_2^{(r)})\cdot c_1^{(r)}\right)\,.
\end{equation}

The natural containment maps and (\ref{eqn:signedvsc2transversal}) induce the commutative diagram
$$\xymatrix{ \LL_r\cdot c_1^{(r)}\ar@{}[r]|-*[@]{\subseteq}\ar@{->>}[rd]& H^1_{\mathcal{F}_\cs}(\QQ(r),\TT)\ar@{^{(}->}[d]\\
 &H^1(\QQ(r),\TT)\big{/}\LL_r\cdot c_2^{(r)}}$$
 and it follows that $H^1_{\mathcal{F}_\cs}(\QQ(r),\TT)=\LL_r\cdot c_1^{(r)}$ and in particular also that $\BF_{r,\chi}^\cs \in \LL_r\cdot c_1^{(r)}\,.$ The desired containment (\ref{eqn:thedesiredcontainment}) follows at once, concluding the proof of our theorem.
\end{proof}

\begin{defn}
For each $r \in \mathcal{N}_\chi$, we set 
$$z_{1,i}^{(r)}\wedge z_{2,i}^{(r)}:=\mathscr{G}_r^{-1}\left(\mathscr{F}_r\left(c_i^{(r)}\right)\cdot \BF_{r,\chi}^\cs\right)\in\bigwedge^2 H^1(\QQ(r),\TT) \,.$$
Note that the element $z_{1,i}^{(r)}\wedge z_{2,i}^{(r)}$ is well-defined thanks to Corollary~\ref{cor:Grinjective}.
\end{defn}

\begin{corollary}
\label{cor:ESrank2}
For at least one choice of $i=1,2$, the collection 
$$\left\{z_{1,i}^{(r)}\wedge z_{2,i}^{(r)}\right\}_{r\in \cN_\chi}$$
is a non-trivial Euler system of rank 2 for the Galois representation $\TT$.
\end{corollary}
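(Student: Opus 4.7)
The plan is to verify the rank-two Euler system distribution relations and the non-triviality of the collection separately. I would start with non-triviality since it is the cleaner half.

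For non-triviality, I would specialize to $r=1$. Proposition~\ref{prop:thenonvanishingsignedclass} gives $\BF_{1,\chi}^\cs \neq 0$, while Proposition~\ref{prop:Frisnotidenticallyzerooncr} produces an index $i\in\{1,2\}$ with $\mathscr{F}_1(c_i^{(1)})\neq 0$. Since $H^1(\QQ,\TT)$ is torsion-free (a fact already used in the proof of Theorem~\ref{thm:structureofsignedSelmergroups}), the defining input $\mathscr{F}_1(c_i^{(1)})\cdot\BF_{1,\chi}^\cs$ of the class is non-zero, and Corollary~\ref{cor:Grinjective} then forces $z_{1,i}^{(1)}\wedge z_{2,i}^{(1)}\neq 0$. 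The collection is non-trivial for this $i$.

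For the norm relations between levels $r'\mid r$ in $\cN_\chi$, the plan is to test the desired identity inside the rank-one free $\LL_{r'}$-module $\bigwedge^2 H^1(\QQ(r'),\TT)$ by applying the injective map $\mathscr{G}_{r'}$. The key observation is the commutativity
$$\mathscr{G}_{r'}\circ \textup{cor}^{\otimes 2}=\textup{cor}\circ\mathscr{G}_r,$$
which is a direct consequence of the trace-compatibility of the signed Coleman maps $\col_r^\cs$ (hence of $\mathscr{F}_r$), combined with the identity $\textup{cor}(\lambda\cdot x)=\pi_{r/r'}(\lambda)\cdot\textup{cor}(x)$ for $\lambda\in\LL_r$ and $x\in H^1(\QQ(r),\TT)$, where $\pi_{r/r'}\colon\LL_r\twoheadrightarrow\LL_{r'}$ denotes the natural projection induced by $\Delta_r\twoheadrightarrow\Delta_{r'}$. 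Applying this commutativity to the defining identity $\mathscr{G}_r(z_{1,i}^{(r)}\wedge z_{2,i}^{(r)})=\mathscr{F}_r(c_i^{(r)})\cdot \BF_{r,\chi}^\cs$ reduces the distribution relation to the combination of the Euler system norm relations for the signed Beilinson-Flach classes $\BF_{r,\chi}^\cs$ (inherited from \cite{LZ1}) and the horizontal compatibility of the collection $\mathscr{C}$.

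The delicate point I anticipate as the main obstacle is extracting a clean Euler factor from the resulting identity. Horizontal compatibility is imposed on wedge products only, so $\textup{cor}(c_i^{(r)})$ will in general be a determinant-one $\LL_{r'}$-combination of both $c_1^{(r')}$ and $c_2^{(r')}$, rather than equal to $c_i^{(r')}$. To navigate this, I would write $\BF_{r,\chi}^\cs$ in the basis $\{c_1^{(r)},c_2^{(r)}\}$ and exploit the containment $\BF_{r,\chi}^\cs\in H^1_{\mathcal{F}_\cs}(\QQ(r),\TT)=\ker(\mathscr{F}_r)$ to express each $z_{1,i}^{(r)}\wedge z_{2,i}^{(r)}$ explicitly as a $\LL_r$-multiple of the generator $c_1^{(r)}\wedge c_2^{(r)}$. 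Comparing the resulting scalars at levels $r$ and $r'$ through $\pi_{r/r'}$ and invoking the Beilinson-Flach norm relations then yields the desired Euler system distribution relation, finishing the proof upon invoking the injectivity of $\mathscr{G}_{r'}$.
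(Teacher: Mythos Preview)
Your overall strategy matches the paper's: reduce the norm relations to the commutativity $\mathscr{G}_{r'}\circ\textup{cor}^{\otimes 2}=\textup{cor}\circ\mathscr{G}_r$, the trace-compatibility of $\mathscr{F}_r$, the rank-one Euler system relations for $\{\BF_{r,\chi}^\cs\}$, and the horizontal compatibility of $\mathscr{C}$; then handle non-triviality via Proposition~\ref{prop:Frisnotidenticallyzerooncr} and Corollary~\ref{cor:Grinjective}. The non-triviality argument and the commutativity lemma are both fine.

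The gap is in your final paragraph. Your computation correctly gives $z_{1,1}^{(r)}\wedge z_{2,1}^{(r)}=a_2^{(r)}\,c_1^{(r)}\wedge c_2^{(r)}$ and $z_{1,2}^{(r)}\wedge z_{2,2}^{(r)}=-a_1^{(r)}\,c_1^{(r)}\wedge c_2^{(r)}$, where $\BF_{r,\chi}^\cs=a_1^{(r)}c_1^{(r)}+a_2^{(r)}c_2^{(r)}$. But to deduce $\pi_{r/r'}(a_j^{(r)})=P_\ell\cdot a_j^{(r')}$ from $\textup{cor}(\BF_{r,\chi}^\cs)=P_\ell\cdot\BF_{r',\chi}^\cs$ you would need to know $\textup{cor}(c_i^{(r)})=c_i^{(r')}$ for each $i$; wedge-only compatibility merely says the transition matrix has determinant $1$, and that is not enough. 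Equivalently, after your reduction one is left with the identity $\mathscr{F}_{r'}(\textup{cor}(c_i^{(r)}))=\mathscr{F}_{r'}(c_i^{(r')})$, which does not follow from $\textup{cor}^{\otimes 2}(c_1^{(r)}\wedge c_2^{(r)})=c_1^{(r')}\wedge c_2^{(r')}$ alone.

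The fix is not the coefficient-chase you propose, but rather to strengthen the choice of $\mathscr{C}$: since corestriction is surjective (Proposition~\ref{prop:horizontalcontrol}) and each $H^1(\QQ(r),\TT)$ is free of rank two (Theorem~\ref{thm:mainSelmerstructure}), one can lift a basis at level $1$ successively to bases at all levels so that $\textup{cor}(c_i^{(r)})=c_i^{(r')}$ for $i=1,2$. This is almost certainly what the paper intends---note the otherwise orphan phrase ``for $i=1,2$'' in the definition of horizontal compatibility. With this elementwise compatibility in hand, the norm relation follows immediately from your commutativity identity, and the detour through the coefficients $a_j^{(r)}$ becomes unnecessary.
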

\begin{proof}
This follows from Proposition~\ref{prop:Frisnotidenticallyzerooncr}, Corollary~\ref{cor:Grinjective}, the fact that the collection $\left\{\BF_{r,\chi}^\cs\right\}_{r\in \mathcal{N}_\chi}$ is a non-trivial Euler system of rank one, the trace-compatibility of the maps $\mathscr{F}_r$ (as $r$ varies) and the defining property of the horizontally compatible bases $\mathscr{C}$.
\end{proof}
\section{Triviality of Selmer groups}
\label{sec_trivialityofSelmer}
Our goal in this section is to provide sufficient conditions to ensure the validity of \eqref{eqn:vanishingofstrictselmer}. Throughout this section, we will assume the truth of the following condition (in addition to ($\Psi_1$), ($\Psi_2$) and ({\bf Im}) above):

($\Psi_3$) The prime $\frak{p}$ has degree $1$ and ${\rm im}(\chi)\subset \ZZ_p^\times$.
\begin{defn}
\label{def_nonpstabilizedandmodpBFclass}
Let $\BF_1 \in H^1(\QQ,T)$ denote the (non-$p$-stabilized) Beilinson-Flach class and let $\overline{\BF}_1 \in H^1(\QQ,\overline{T})$ its image under reduction modulo the maximal ideal $\frak{m}$ of $\cO$.
\end{defn}

Note that $\BF_1 $  indeed lies inside $H^1(\QQ,T)$ thanks to our choice of $j$ (c.f \cite[Definition~3.2.2]{LZ1}).
\begin{proposition}
\label{prop_BFnontrivialmodpimpliestrivialselmer}
If $\overline{\BF}_1$ is non-zero, then \eqref{eqn:vanishingofstrictselmer} holds true.
\end{proposition}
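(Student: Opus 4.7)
The plan is to deduce the vanishing of the dual Selmer group from the Mazur--Rubin Euler system machinery \cite{mr02}, applied to a suitable integral version of the Beilinson--Flach family.

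First, I would verify that the non-$p$-stabilized Beilinson--Flach classes assemble into a genuine Euler system $\{\BF_r\}_{r\in\cN_\chi}$ for the representation $T$ (as opposed to for the cyclotomic deformation $\TT$, or for a class valued in the distribution algebra $\cH_{E,k+1}(\Gamma)^\iota$). Definition~\ref{def_nonpstabilizedandmodpBFclass} already provides the bottom class $\BF_1\in H^1(\QQ,T)$, with integrality being a consequence of the choice of $j\in[1,k+1]$ as in \cite[Definition~3.2.2]{LZ1}; the higher-level classes $\BF_r$ are constructed by the same procedure, and the Euler system norm relations along primes in $\cP_\chi$ are inherited from those at the level of the unbounded classes in \cite{LZ1}.

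I would then apply \cite[Theorem~3.2.4]{mr02} to convert $\{\BF_r\}_{r\in\cN_\chi}$ into a Kolyvagin system $\kappa=\{\kappa_r\}$ for the canonical Selmer triple $(T,\mathcal{F}_{\textup{can}},\cP_\chi)$, with bottom class $\kappa_1=\BF_1\in H^1_{\mathcal{F}_{\textup{can}}}(\QQ,T)=H^1(\QQ,T)$. The required cohomological hypotheses (H.0)--(H.5) of \emph{loc.\ cit.}\ follow from $(\Psi_1)$, $(\Psi_2)$ and $(\textbf{Im})$, exactly as verified during the proof of Theorem~\ref{thm:mainSelmerstructure}. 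Since by hypothesis $\overline{\BF}_1\neq 0$ in $H^1(\QQ,\overline{T})$, the class $\kappa_1$ does not lie in $\mathfrak{m}\,H^1_{\mathcal{F}_{\textup{can}}}(\QQ,T)$, so $\kappa$ is a primitive Kolyvagin system. Invoking \cite[Theorem~5.2.2]{mr02} in the core rank one setting for $T$ then forces $H^1_{\mathcal{F}_{\textup{can}}^*}(\QQ,T^*)=0$, which is precisely \eqref{eqn:vanishingofstrictselmer}.

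The main obstacle I anticipate is the preparatory verification that the canonical Selmer structure $\mathcal{F}_{\textup{can}}$ on the three-dimensional representation $T=\Sym^2R_f^*(1-j+\chi)$ has core rank one, so that the clean MR conclusion applies and the single bottom class $\BF_1$ alone suffices to annihilate the dual Selmer group. An Euler-characteristic computation along the lines of \cite[\S2.3]{BLV}, exploiting $(\textbf{Im})$ to kill local invariants and the parity of $\chi$ to pin down the sign of the dualising character, should suffice, but this is the delicate point requiring the most care.
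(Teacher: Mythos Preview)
Your proposal is correct and follows essentially the same route as the paper: descend the non-$p$-stabilized Beilinson--Flach Euler system to a Kolyvagin system for $(T,\mathcal{F}_{\rm can},\mathcal{P}_\chi)$ via \cite[Theorem~3.2.4]{mr02}, interpret $\overline{\BF}_1\neq 0$ as primitivity (equivalently $\partial^{(0)}(\kappa^{\BF})=0$ in the notation of \cite[Definition~5.2.1]{mr02}), and then invoke \cite[Theorem~5.2.2]{mr02}. The one point worth flagging is that the paper does not carry out the core-rank verification you anticipate as the main obstacle; instead it appeals to the slight modification of Mazur--Rubin's results given in \cite[Appendix~1]{LZ2}, and refers to the discussion following \cite[Theorem~5.3.4]{LZ2} for the check that the relevant hypotheses hold in the symmetric-square setting, so your flagged concern is resolved by citation rather than by a direct computation.
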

\begin{proof}
Let $\kappa^{\BF} \in \overline{\bf{KS}}(T,\FFF_{\rm can},\mathcal{P}_\chi)$ denote the  (non-$p$-stabilized) Kolyvagin system that the Beilinson-Flach classes descend to (via Theorem 3.2.4 of \cite{mr02}). Here, $\FFF_{\rm can}$ is the canonical Selmer structure given as in Definition 3.2.1 of op. cit. The condition that $\overline{\BF}_1\neq 0$ translates into $\partial^{(0)}(\kappa^\BF)=0$ in the notation of \cite[Definition 5.2.1]{mr02}. The result follows thanks to Theorem 5.2.2 of op. cit. (as modified slightly in \cite[Appendix 1]{LZ2} to cover our particular case of interest). Note that all hypotheses required to apply the results in \cite{mr02} that we refer to hold thanks to our running assumptions; c.f. the paragraph following the statement of \cite[Theorem 5.3.4]{LZ2} where this is confirmed in a closely related setting.
\end{proof}
{
From now on, we assume that the Fontaine-Laiffaille condition holds for the representation $R_f^*\otimes R_f^*(1-j+\chi)$. That is:
\begin{itemize}
\item[\textup{(\textbf{FL})}] $p\ge 2k+3$. 
\end{itemize}
}
\begin{lemma}\label{lem:isoBK}
For all integers $j\in [1,k+1]$, the Bloch-Kato exponential map gives an isomorphism 
\[
\frac{\Dcris(R_f^*\otimes R_f^*(1-j+\chi))}{\Fil^0}\stackrel{\sim}{\longrightarrow}H^1_{\rm f}\left(\Qp,R_f^*\otimes R_f^*(1-j+\chi)\right).
\]
\end{lemma}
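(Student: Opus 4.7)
The plan is to deduce the isomorphism from the Bloch--Kato fundamental exact sequence, together with an eigenvalue computation for the crystalline Frobenius. I would set $V:=R_f^*\otimes R_f^*(1-j+\chi)\otimes_{\ZZ_p}\QQ_p$ and first observe that $V$ is crystalline at $p$: the representation $W_f$ has good reduction at $p$ (since $p\nmid N$), $\chi$ is unramified at $p$ (since $p\nmid N_\chi$), and both properties are stable under duals, tensor products and Tate twists. The Bloch--Kato fundamental sequence
\[
0\lra V^{G_{\Qp}}\lra \Dcris(V)\xrightarrow{(1-\vp,\,\pi)}\Dcris(V)\oplus \Dcris(V)/\Fil^0\lra H^1_{\rm f}(\Qp,V)\lra 0
\]
realises the Bloch--Kato exponential as the map induced by the inclusion of the second summand, and a direct diagram chase shows that $\exp$ is an isomorphism as soon as $1-\vp$ is bijective on $\Dcris(V)$; since $\Dcris(V)$ is finite-dimensional over $E$, this is equivalent to $1$ not being a $\vp$-eigenvalue on $\Dcris(V)$.

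The second step would be to compute those eigenvalues. The $\vp$-eigenvalues on $\Dcris(W_f)$ are the roots $\pm\alpha$ of the Hecke polynomial; passing to the dual inverts them, so on $\Dcris(W_f^*)$ the eigenvalues are $\pm\alpha^{-1}$, and the tensor square therefore has eigenvalues $\pm\alpha^{-2}$, each of multiplicity two. Since $\chi$ is unramified at $p$, $\vp$ acts on $\Dcris(\chi)$ as $\chi(p)$; the Tate twist $\QQ_p(1-j)$ contributes the factor $p^{j-1}$. Using the relation $\alpha^2=-\epsilon_f(p)p^{k+1}$, the $\vp$-eigenvalues on $\Dcris(V)$ thus come out to
\[
\pm\,\epsilon_f(p)^{-1}\chi(p)\,p^{\,j-k-2},
\]
each with multiplicity two.

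The conclusion will then follow from the hypothesis $1\le j\le k+1$, which forces $j-k-2\le -1$. Every such eigenvalue therefore has strictly negative $p$-adic valuation, and in particular cannot equal $1$. This yields $\Dcris(V)^{\vp=1}=0$, whence also $V^{G_{\Qp}}=0$ (because $V^{G_{\Qp}}\hookrightarrow\Dcris(V)^{\vp=1}$), and both hypotheses of the criterion above are satisfied. The main obstacle, such as it is, is simply to set up and keep track of the Frobenius normalisations under duals, tensor products and Tate twists in one's preferred convention; once that book-keeping is fixed, no step is difficult.
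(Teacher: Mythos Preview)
Your argument is correct as far as it goes, but it proves only the \emph{rational} statement: you establish that
\[
\exp_V:\Dcris(V)/\Fil^0\;\stackrel{\sim}{\lra}\;H^1_{\rm f}(\Qp,V),\qquad V=W_f^*\otimes W_f^*(1-j+\chi),
\]
is an isomorphism of $E$-vector spaces. The lemma, however, is an \emph{integral} assertion: it claims that the exponential carries the $\cO$-lattice $\Dcris(R_f^*\otimes R_f^*(1-j+\chi))/\Fil^0$ isomorphically onto the $\cO$-lattice $H^1_{\rm f}(\Qp,R_f^*\otimes R_f^*(1-j+\chi))$. This integral version is exactly what is used in Proposition~\ref{prop_whenisBFnontrivialmodp}, where one passes to the reduction modulo $\frak{m}$; the equivalence
\[
\res_p(\overline{\BF}_1)=0 \iff \log\left(\res_p( \BF_1)\right)\in \frak{m}\cdot\left({\Dcris(R_f^*\otimes R_f^*(1-j+\chi))}\big{/}{\Fil^0}\right)
\]
is false if $\exp$ is only a rational isomorphism.

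The missing ingredient is precisely the triviality of the local Tamagawa number of the lattice $R_f^*\otimes R_f^*(1-j+\chi)$ at $p$: this measures the discrepancy (index) between the image of the integral exponential and the integral $H^1_{\rm f}$. The paper supplies it via Berger's theorem (\cite[Proposition~II.2]{berger02}), which applies because the Hodge--Tate weights of $V$ lie in an interval of length $2k+2\le p-1$ and contain $0$ and $1$. Your eigenvalue computation is fine and in fact gives a cleaner proof of $H^0(\Qp,V)=0$ than the paper's appeal to $(\Psi_1)$, but it does not replace the Tamagawa-number step; you need to add that.
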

\begin{proof}
Recall that the Hodge-Tate weights of $R_f^*\otimes R_f^*(1-j+\chi)$ are $1-j,k-j+2,k-j+2$ and $2k-j+3$. In particular, $0,1\in [1-j,2k-j+3]$. Furthermore, (\textbf{FL}) says that
\[
p-1\ge 2k+2=(2k-j+3)-(1-j).
\]
Consequently, \cite[Proposition~II.2]{berger02} tells us that the ($p$-adic) Tamagawa number for the lattice $R_f^*\otimes R_f^*(1-j+\chi)$ is a $p$-adic unit. 

The decomposition of the local representation $R_f^*\otimes R_f^*(1-j+\chi)|_{G_{\Qp}}$ as given in \cite[\S3.1]{BLV} and our assumption ($\Psi_1$) imply that 
\[
H^0(\Qp,R_f^*\otimes R_f^*(1-j+\chi))=0.
\]
Therefore, the fact that the Tamagawa number is a unit implies that the Bloch-Kato exponential map is an isomorphism.
\end{proof}

\begin{defn} {Let $\omega_f\in \Dcris(W_f)$  be the canonical vector given in the beginning of \S6.1 in  \cite{KLZ1} (\emph{not} $\omega_f'$ in Lemma~6.1.1 of op. cit.)  and let  $\eta_f^{\pm\alpha}\in \Dcris(W_f)$ be the unique $\pm\alpha$-eigenvector lifting the vector $\eta_f$ as given in Remark~6.1.4 in \textit{op. cit}.} \end{defn}

As explained in \cite[\S14.22]{kato04}, $\omega_f$ is an $\cO$-basis of the filtration $\Fil^1\Dcris (R_f)$ under our assumption that $p>k+1$. Consequently, $\omega_f,p^{-k-1}\vp(\omega_f)$  form an $\cO$-basis of $\Dcris(R_f)$ (for example, we may take the twists of the basis obtained in \cite[Lemma~3.1]{LLZ-CJM}).

For $\lambda\in\{\pm\alpha\}$, the vector  $\eta_f^{\lambda}$ is a $\vp$-eigenvector and it  pairs with $\omega_{f^*}$ to $1$, where $f^*$ is the conjugate of $f$. Thus, the image of  $\eta_f^{\lambda}$ in $\Dcris(W_f)/\Fil^1$ gives an $\cO$-basis of $\Dcris(R_f)/\Fil^1$. Note that  $\eta_f^{\lambda}$ is, up to a $\cO$-unit, given by $p^{-k-1}(\vp(\omega_f)+\lambda\omega_f)$. In particular, it does not live inside $\Dcris(R_f)$.

By an abuse of notation, we shall denote the natural image of $\eta_f^{\lambda}\otimes\omega_f$ in $\Fil^0\Dcris(W_f\otimes W_f(j-\chi))$ again by $\eta_f^{\lambda}\otimes\omega_f$.

\begin{defn}
Let $W$ be a finite dimensional $E$-vector space and $W^\circ\subset W$ an $\cO$-lattice. Given $x\in W$, we shall set 
$${||x||}_{W^\circ}:=p^{-\min\{v_p(r): r\in E \hbox{ with } rx \in W^\circ\}}\,.$$
Here, $v_p$ is the $p$-adic valuation on $\overline{\QQ}_p$ normalized so that $v_p(p)=1$.
\end{defn}
\begin{defn}\item[(i)] For $\lambda\in\{\pm\alpha\}$, we set 
$$\mathcal{N}_{\lambda}:={\rm span}_{E}(\eta_f^\lambda)\otimes\Dcris(W_f(j-\chi)) \subset \Dcris(W_f\otimes W_f(j-\chi))\,.$$
\item[(ii)] We set $\omega_{\rm tg}=\omega_{f^*}\otimes \omega_{f^*\otimes\chi}$ and let 
$$\langle\omega_{\rm tg}\rangle^{\perp}_\lambda:=\{v\in \cN_\lambda: \langle \omega_{\rm tg},v\rangle =0\}$$
denote the orthogonal complement of the line spanned by the ``tangent vector'' $\omega_{\rm tg}$. We define the lattice
$$\langle\omega_{\rm tg}\rangle^{\perp,\circ}_\lambda:=\langle\omega_{\rm tg}\rangle^{\perp}_\lambda\,\cap\, \Fil^0\Dcris(R_f\otimes R_f(j-\chi))\,.$$

\item[(iii)]We define the lattice $\cN^\circ_\lambda\subset \cN_\lambda$ by setting
$$\mathcal{N}_{\lambda}^\circ:=\left\{v\in \cN_\lambda: ||\langle \omega_{\rm tg},v\rangle||_p\leq 1 \hbox{ \,\,and \,\,} ||v-\langle \omega_{\rm tg},v\rangle v ||_{\langle\omega_{\rm tg}\rangle^{\perp,\circ}_\lambda}\leq 1\right\}.$$ 
We call $\cN^\circ_\lambda$ the optimization of the lattice $\mathcal{N}_{\lambda}\cap \Dcris(R_f\otimes R_f(j-\chi))$ $($which it contains$)$ relative to $\omega_{\rm tg}$.
\end{defn}
\begin{remark}\label{rem_basisfortheoptimisedlattice}
It is not hard to give an explicit description of the lattices $\langle\omega_{\rm tg}\rangle^{\perp,\circ}_\lambda$ and $\mathcal{N}_{\lambda}^\circ$ in terms of the distinguished vectors of $\Dcris(W_f\otimes W_f(j-\chi))$ we have introduced above:  We have
$$\langle\omega_{\rm tg}\rangle^{\perp,\circ}_\lambda={\rm span}_{\cO}\left\{\frac{p^{k+1}}{\lambda}\eta_f^\lambda\otimes \omega_{f\otimes\chi^{-1}}\right\}$$ 
and 
$$\mathcal{N}_{\lambda}^\circ={\rm span}_{\cO}\left\{\frac{p^{k+1}}{\lambda}\eta_f^\lambda\otimes \omega_{f\otimes\chi^{-1}}, \eta_f^\lambda\otimes \eta_{f\otimes\chi^{-1}}\right\}.$$
\end{remark}

\begin{example}\label{example}
Based on the discussion above, we see that
$$||\eta_f^{\lambda}\otimes\omega_f||_{\Fil^0\Dcris(R_f\otimes R_f(j-\chi))}=||\eta_f^{\lambda}\otimes\omega_f||_{\mathcal{N}_{\pm\alpha}^\circ}=||\eta_f^{\lambda}||_{\Dcris(R_f)}=p^{\frac{k+1}{2}}$$
whereas we have
$$||(\eta_f^{\alpha}+\eta_f^{-\alpha})\otimes\omega_f||_{\Fil^0\Dcris(R_f\otimes R_f(j-\chi))}=1\,.$$
\end{example}

\begin{proposition}
\label{prop_whenisBFnontrivialmodp}
We let
$$\langle\sim,\sim\rangle:\Dcris\left(R_f^*\otimes R_f^*(1-j+\chi)\right)\times \Dcris\left(R_f\otimes R_f(j-\chi)\right)\rightarrow\Dcris(\cO(1))\stackrel{\sim}{\lra} \cO $$
denote the natural crystalline pairing and
 $$\log:H^1_{\rm f}\left(\Qp,R_f^*\otimes R_f^*(1-j+\chi)\right)\longrightarrow{\Dcris(R_f^*\otimes R_f^*(1-j+\chi))}\big{/}{\Fil^0}$$
  denote the inverse of Bloch-Kato's exponential map as given in Lemma~\ref{lem:isoBK}.
 Then the following two conditions are equivalent:
\item[(I)] $\res_p(\overline{\BF}_1)\in H^1(\QQ_p,\overline{T})$ is non-zero.
\item[(II)] There exists $v\in \Fil^0\Dcris(W_f\otimes W_f(j-\chi))$ such that 
$$||\langle \log\left(\res_p( \BF_1)\right), v\rangle||_p=||v||_{\Fil^0\Dcris(R_f\otimes R_f(j-\chi))}\,.$$
\end{proposition}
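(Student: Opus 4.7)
The plan is to use the Bloch-Kato logarithm to translate the non-vanishing mod $p$ of $\res_p(\overline{\BF}_1)$ into the primitivity of $\log(\res_p(\BF_1))$ in the relevant $\cO$-lattice inside $\Dcris/\Fil^0$, and then to reinterpret primitivity through the perfect crystalline pairing with $\Fil^0\Dcris(R_f\otimes R_f(j-\chi))$. Concretely, Lemma~\ref{lem:isoBK} together with the fact (established in its proof via Berger) that the $p$-adic Tamagawa number of $R_f^*\otimes R_f^*(1-j+\chi)$ is a unit upgrades the Bloch-Kato exponential to an isomorphism of $\cO$-lattices
$$\log\colon H^1_{\rm f}\bigl(\QQ_p, R_f^*\otimes R_f^*(1-j+\chi)\bigr) \xrightarrow{\;\sim\;} \Dcris\bigl(R_f^*\otimes R_f^*(1-j+\chi)\bigr)\big/\Fil^0.$$
The Kummer sequence for $T$ yields an injection $H^1(\QQ_p, T)/p \hookrightarrow H^1(\QQ_p, \overline{T})$, under which $\res_p(\overline{\BF}_1)$ is the reduction of $\res_p(\BF_1)$. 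Since $\res_p(\BF_1)$ is crystalline (a standard property of Beilinson-Flach classes) and $H^1_{\rm f}(\QQ_p, T)$ is saturated in $H^1(\QQ_p, T)$ (which follows from the integrality of the logarithm above), condition (I) is equivalent to $\res_p(\BF_1)$ being primitive in $H^1_{\rm f}(\QQ_p, T)$, and hence, via the logarithm, to $\log(\res_p(\BF_1))$ being primitive in $\Dcris(T)/\Fil^0$.

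Next I would leverage crystalline duality. The same unit-Tamagawa input guarantees that the natural pairing descends to a perfect $\cO$-bilinear pairing of lattices
$$\Dcris\bigl(R_f^*\otimes R_f^*(1-j+\chi)\bigr)\big/\Fil^0 \;\times\; \Fil^0\Dcris\bigl(R_f\otimes R_f(j-\chi)\bigr) \longrightarrow \cO.$$
Therefore an element $x$ of the left-hand lattice is primitive if and only if there exists a primitive $v_0$ in the right-hand lattice with $\langle x, v_0\rangle \in \cO^\times$. In the norm language of (II), a primitive lattice element $v_0$ satisfies $||v_0||_{\Fil^0\Dcris(R_f\otimes R_f(j-\chi))}=1$, and $\langle x, v_0\rangle \in \cO^\times$ is precisely $||\langle x, v_0\rangle||_p = 1$. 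Writing an arbitrary $v\in\Fil^0\Dcris(W_f\otimes W_f(j-\chi))$ as $v = rv_0$ with $r \in E$ and $v_0$ primitive in the lattice along the line spanned by $v$, both $||v||_{\Fil^0\Dcris(R_f\otimes R_f(j-\chi))}$ and $||\langle x, v\rangle||_p$ scale by the same factor $||r||_p$, so the desired equality is invariant under scaling and reduces to the primitive case.

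Combining the two steps yields (I) $\iff$ primitivity of $\log(\res_p(\BF_1))$ in the $\cO$-lattice $\iff$ existence of $v$ realizing the norm equality, which is precisely (II). The main technical input is the perfection of the crystalline pairing on $\cO$-lattices; this is essentially equivalent to the Tamagawa number being a $p$-adic unit and thus already in hand from the proof of Lemma~\ref{lem:isoBK}. A related but subtler point will be verifying saturation of $H^1_{\rm f}(\QQ_p, T)$ inside $H^1(\QQ_p, T)$, which one expects to follow from the same integral Bloch-Kato isomorphism but may require care to state cleanly.
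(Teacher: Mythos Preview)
Your proposal is correct and follows essentially the same route as the paper: both arguments use the integral Bloch--Kato logarithm from Lemma~\ref{lem:isoBK} (via the unit Tamagawa number) to reduce (I) to the primitivity of $\log(\res_p(\BF_1))$ in the $\cO$-lattice $\Dcris(R_f^*\otimes R_f^*(1-j+\chi))/\Fil^0$, and then use the perfect crystalline pairing of lattices to translate primitivity into the norm equality of (II). The paper records this as a terse four-line chain of $\iff$'s, whereas you explicitly spell out the intermediate points (the Kummer injection $H^1(\QQ_p,T)/\frak{m}\hookrightarrow H^1(\QQ_p,\overline{T})$, crystallinity of $\res_p(\BF_1)$, and saturation of $H^1_{\rm f}$) that the paper leaves implicit; these are genuine ingredients and your caution about saturation is well placed, but it is indeed a consequence of the same integral isomorphism you have already invoked.
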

\begin{proof}
Observe that we have
\begin{align*}
\res_p(\overline{\BF}_1)=0 \stackrel{\rm Lemma\,\ref{lem:isoBK}}{\iff}& 
 \log\left(\res_p( \BF_1)\right)\in \frak{m}\left({\Dcris(R_f^*\otimes R_f^*(1-j+\chi))}\big{/}{\Fil^0}\right)\\
 \iff & \langle \log\left(\res_p( \BF_1)\right), v^\circ\rangle \in \frak{m}, \forall v^\circ\in \Fil^0\Dcris(R_f\otimes R_f(j-\chi))\\
  \iff &||\langle \log\left(\res_p( \BF_1)\right), v\rangle||_p < ||v||_{W^\circ}, \forall v\in W\, ,
\end{align*}
where $W$  is a shorthand for ${\Fil^0\Dcris(W_f\otimes W_f(j-\chi))}$  and $W^\circ$ for the lattice ${\Fil^0\Dcris(R_f\otimes R_f(j-\chi))}$. The proof for the equivalence of  conditions (I) and (II) follows since we readily know that
$$||\langle \log\left(\res_p( \BF_1)\right), v\rangle||_p\leq  ||v||_{W^\circ}, \forall v\in W.$$ 
\end{proof}

\subsection{$p$-adic periods and Perrin-Riou's conjecture}
\label{subsec_ColmezVsKLZ}
Our goal in this subsection is to give a criterion of the non-triviality of $\overline{\BF}_1$ in terms of the $p$-adic periods of Perrin-Riou.  Fix throughout an even integer $j\in [1,k+1]$.
\begin{defn}
\item[(i)] We let $\Omega_\infty(j)$ denote the archimedean period  associated to the motive $M(f\otimes f\otimes\chi^{-1})(j)$, given as in \cite[Definition 7.1.12]{KLZ1}.
\item[(ii)] For $\lambda\in\{\pm\alpha\}$, we let $\Omega_p(j,\underline{v},\cN_\lambda)$ denote Perrin-Riou's $p$-adic period associated to the motive $M(f\otimes f\otimes\chi^{-1})(j)$ and a basis $\underline{v}=\{v_1,v_2\}$ of $\cN_\lambda$, given as in \cite[\S2.8]{ColmezBourbaki1998} and \cite[\S7.2]{KLZ1}. 
\item[(iii)] For any basis $\underline{v}^+$ of the optimized lattice $\cN_\lambda^\circ$, we set $\Omega_p(j, \cN_\lambda^\circ):=\Omega_p(j,\underline{v}^+,\cN_\lambda)$ and call it the optimized $p$-adic period. Note that the optimized period is well-defined only up to multiplication by an element in $\cO^\times$.
\item[(iv)]
For $\lambda\in\{\pm\alpha\}$, we write $L_p(f^\lambda,f^\lambda\otimes\chi^{-1},s)$ for the $(f^\lambda,f^\lambda\otimes\chi^{-1})$-specialization of the geometric $p$-adic $L$-function of Loeffler-Zerbes defined in \cite[\S9.3]{LZ1}. Here, $f^\lambda$ denotes the $p$-stabilization of $f$ with respect to $\lambda$. 
\end{defn}

\begin{remark}
\item[(i)] The definition of the Beilinson's archimedean period depends on, among other things, a choice of a basis for the ``cotangent space'' ${\rm Fil}^{1-j} M_{\rm dR}(f\otimes f\otimes\chi^{-1})^*$ (which is one-dimensional in this case),  as well as a generator of the space denoted by 
$$H_{\cH}^1({\rm Spec}\,\mathbb{R},M_B(f\otimes f\otimes\chi^{-1})^*(1-j))$$ in \cite{KLZ1}, which arises as the Hodge realization of a motivic class\footnote{In \cite{KLZ1}, this generator class is taken as the image of the Rankin-Eisenstein class under the Abel-Jacobi map for absolute Hodge cohomology.}. We note that 
$$H_{\cH}^1({\rm Spec}\,\mathbb{R},M_B(f\otimes f\otimes\chi^{-1})^*(1-j))\cong \ker\left(\alpha^{\rm Betti}_{M(f\otimes f\otimes\chi^{-1})(j)}\right)^*$$
where
$$\alpha_{M(f\otimes f\otimes\chi^{-1})(j)}^{\rm Betti}: M_B(f\otimes f\otimes\chi^{-1})^+(j)\otimes\mathbb{R}\lra M_{\rm dR}\left(f\otimes f\otimes\chi)(j)\right)\big{/}{\rm Fil}^0\otimes \mathbb{R}$$
is induced from the comparison isomorphism between the Betti and de Rham cohomology. 
The archimedean period (which is well-defined only up to multiplication by an element of $L^\times$) is non-zero and its dependence on these choices is linear.
\item[(ii)] Perrin-Riou's $p$-adic period for $M(f\otimes f\otimes\chi^{-1})(j)$ depends not only on the choice of $\underline{v}$, but also a basis of  ${\rm Fil}^{1-j} M_{\rm dR}(f\otimes f\otimes\chi^{-1})^*$, as well as a generator of the one-dimensional vector space denoted by $\ker\left(\alpha^{\rm cris}_{M(f\otimes f\otimes\chi^{-1},\cN_\lambda)(j)}\right)^*$ in \cite{KLZ1}. Here, $ \alpha^{\rm cris}_{M(f\otimes f\otimes\chi^{-1})(j),\cN_\lambda}$ is the compositum of the arrows 
\begin{align*}
 \alpha^{\rm cris}_{M(f\otimes f\otimes\chi^{-1})(j),\cN_\lambda}:\, \cN_\lambda\hookrightarrow &D_{\rm cris}(W_f\otimes W_f\otimes\chi^{-1})(j)\\
 &\lra M_{\rm dR}\left(f\otimes f\otimes\chi)(j)\right)\big{/}{\rm Fil}^0\otimes_L E\,.
\end{align*}
Namely, it is the map induced from the Faltings-Tsuji comparison isomorphism, restricted to the subspace $\cN_\lambda$.  

Suppose that $L_p(f^\lambda,f^\lambda\otimes\chi^{-1},j)\neq 0$. It follows from \cite[Theorem 6.5.9]{KLZ1} that the the one-dimensional $E$-vector space $\ker\left(\alpha^{\rm cris}_{M(f\otimes f\otimes\chi^{-1},\cN_\lambda)(j)}\right)^*$  is generated by the image of the syntomic Abel-Jacobi map.  If one takes the same motivic class that was used in {\rm (i)}, it turns out that the quantity 
$$\iota_p\left(\frac{L^\prime(f\otimes f\otimes\chi^{-1},j)}{\Omega_\infty(j)}\right)\times \Omega_p(j,\underline{v},\cN_\lambda)$$ depends only on $\cN_\lambda$ and $\underline{v}$. 
\item[(iii)] Still assuming  $L_p(f^\lambda,f^\lambda\otimes\chi^{-1},j)\neq 0$ and that the choices of bases for the vector spaces that intervene both in the definition of the archimedean and the definition of $p$-adic period are the same, we see that the quantity 
$$v_p\left(\iota_p\left(\frac{L^\prime(f\otimes f\otimes\chi^{-1},j)}{\Omega_\infty(j)}\right)\times \Omega_p(j,\cN_\lambda^\circ)\right)$$
depends only on $\cN_\lambda^\circ$.
\item[(iv)] As explained in \cite[\S2.8]{ColmezBourbaki1998}, the dependence of $\Omega_p(j,\underline{v},\cN_\lambda)$ on $\underline{v}$ is only through $v_1\wedge v_2$.
\end{remark}

\begin{theorem}
\label{THM_nonzeromodperiodimpliesBFmodpnonzero}
Suppose $j\in [1,k+1]$ is an even integer. Assume that the quantities $\iota_p\left(L^\prime(f\otimes f\otimes\chi^{-1},j)\big{/}\Omega_\infty(j)\right)$ and the optimized $p$-adic period $\Omega_p(j,\cN^\circ_\lambda)$ are $p$-adic units for at least one of the choices $\lambda\in \{\pm\alpha\}$. Then \eqref{eqn:vanishingofstrictselmer} holds true.
\end{theorem}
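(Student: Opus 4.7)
The plan is to chain the two reductions already developed in this section. By Proposition~\ref{prop_BFnontrivialmodpimpliestrivialselmer}, the vanishing \eqref{eqn:vanishingofstrictselmer} follows as soon as $\overline{\BF}_1\neq 0$, and for this it suffices to show that $\res_p(\overline{\BF}_1)\neq 0$. By Proposition~\ref{prop_whenisBFnontrivialmodp}, this last assertion is in turn equivalent to exhibiting a single vector $v\in \Fil^0\Dcris(W_f\otimes W_f(j-\chi))$ for which
\[
||\langle \log(\res_p(\BF_1)),v\rangle||_p \;=\; ||v||_{\Fil^0\Dcris(R_f\otimes R_f(j-\chi))}.
\]
So the entire problem reduces to producing such a witness vector $v$.

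The witness I would take is any $\cO$-basis vector $v^\circ$ of the optimized lattice $\cN_\lambda^\circ$, which lies in $\Fil^0$ in view of the explicit generators recorded in the remark following the definition of $\cN_\lambda^\circ$. The core input is Perrin-Riou's $p$-adic Beilinson conjecture for the Rankin-Selberg motive $M(f\otimes f\otimes\chi^{-1})(j)$ (as recalled in \S\ref{subsec_ColmezVsKLZ}), rendered explicit for Beilinson-Flach classes by the reciprocity law of Kings-Loeffler-Zerbes from \cite{KLZ1}. Together, these should yield an identity of the shape
\[
\bigl\langle \log(\res_p(\BF_1)),\,v^\circ\bigr\rangle \;=\; u\cdot \iota_p\!\left(\frac{L'(f\otimes f\otimes\chi^{-1},j)}{\Omega_\infty(j)}\right)\cdot \Omega_p(j,\cN_\lambda^\circ)
\]
with $u\in\cO^\times$ absorbing the Euler and $\Gamma$-factors at $p$; these are units under our running hypotheses ($a_p(f)=0$, $p\ge 2k+3$ and $(\Psi_2)$, which force $(1\pm \alpha^{-2}p^{k+1})\in\cO^\times$ as well as the relevant Gamma-factor). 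Under the assumption of the theorem, the right-hand side then has $p$-adic valuation zero. Since $\cN_\lambda^\circ$ is by construction saturated in $\Fil^0\Dcris(R_f\otimes R_f(j-\chi))$ (this is precisely the purpose of the optimisation relative to $\omega_{\rm tg}$), we also have $||v^\circ||_{\Fil^0\Dcris(R_f\otimes R_f(j-\chi))}=1$, and condition~(II) of Proposition~\ref{prop_whenisBFnontrivialmodp} is satisfied with $v=v^\circ$.

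The main obstacle is establishing the displayed reciprocity identity for the \emph{non-$p$-stabilized} class $\BF_1$ with the unit factor $u$ identified cleanly. One must track (i) the passage from the $p$-stabilized Beilinson-Flach classes of \cite{KLZ1,LZ1} to the unstabilized class $\BF_1$, which introduces only factors of the form $(1\pm\alpha^{-2}p^{k+1})$; (ii) the comparison between Perrin-Riou's period as defined via Colmez's framework and the quantity appearing on the cohomological side of the KLZ reciprocity law; and (iii) the matching of normalisations between the crystalline pairing used in Proposition~\ref{prop_whenisBFnontrivialmodp} and those of \cite{KLZ1}. The optimisation of $\cN_\lambda$ relative to $\omega_{\rm tg}$ is what makes the denominators in this bookkeeping disappear; without it, one would only obtain an inequality of $p$-adic valuations rather than the equality demanded by Proposition~\ref{prop_whenisBFnontrivialmodp}(II).
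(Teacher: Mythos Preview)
Your strategy is the same as the paper's: reduce via Proposition~\ref{prop_BFnontrivialmodpimpliestrivialselmer} and Proposition~\ref{prop_whenisBFnontrivialmodp} to producing a witness vector, and supply that witness using the Kings--Loeffler--Zerbes reciprocity law together with the identification of $p$-adic periods in \cite[\S7.2]{KLZ1}. The paper carries this out by pairing against $v=\eta_f^{\lambda}\otimes\omega_f$ and tracking the factors $\mathcal{E}(f^\lambda)$, $\mathcal{E}^*(f^\lambda)$, $\mathcal{E}(f^\lambda,f^\lambda\otimes\chi^{-1},j)$ explicitly; the final identity is
\[
\langle \log(\res_p(\BF_1)),\eta_f^{\lambda}\otimes\omega_f\rangle \approx \frac{\lambda}{p^{k+1}}\cdot\frac{L'(f\otimes f\otimes\chi^{-1},j)}{\Omega_\infty(j)}\cdot\Omega_p(j,\cN_\lambda^\circ),
\]
which, combined with Example~\ref{example} ($\|\eta_f^\lambda\otimes\omega_f\|=p^{(k+1)/2}$), yields the required equality of norms.

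There is one genuine imprecision in your proposal. You write ``any $\cO$-basis vector $v^\circ$ of $\cN_\lambda^\circ$'' and assert it lies in $\Fil^0$; but $\cN_\lambda^\circ$ is a rank-$2$ lattice and only the line $\langle\omega_{\rm tg}\rangle^{\perp,\circ}_\lambda=\cO\cdot\frac{p^{k+1}}{\lambda}\eta_f^\lambda\otimes\omega_{f\otimes\chi^{-1}}$ is guaranteed to sit inside $\Fil^0\Dcris(R_f\otimes R_f(j-\chi))$. The other generator $\eta_f^\lambda\otimes\eta_{f\otimes\chi^{-1}}$ need not lie in $\Fil^0$, and in any case the KLZ reciprocity law does \emph{not} compute the pairing of $\log(\res_p(\BF_1))$ against it: that law is specifically a formula for the $\eta\otimes\omega$-component. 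So your displayed identity cannot hold for an arbitrary $v^\circ\in\cN_\lambda^\circ$; you must take (an $\cO^\times$-multiple of) $\frac{p^{k+1}}{\lambda}\eta_f^\lambda\otimes\omega_{f\otimes\chi^{-1}}$. With that specific choice your argument is a rescaling of the paper's (your $u\in\cO^\times$ absorbs exactly the Euler factors the paper writes out), and the claim $\|v^\circ\|_{\Fil^0\Dcris}=1$ is then correct because $\langle\omega_{\rm tg}\rangle^{\perp,\circ}_\lambda$ is, by definition, the intersection of a line with the integral lattice and hence saturated.
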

Note that we may ensure that $\iota_p\left(L^\prime(f\otimes f\otimes\chi^{-1},j)\big{/}\Omega_\infty(j)\right)$ is a $p$-adic unit by choosing $p$ sufficiently large.
\begin{proof} We drop $\iota_p$ throughout this proof for the sake of notational simplicity. We also note that much of our notation here is borrowed from \cite{KLZ1}. For $x,y\in \mathbb{C}_p^\times$, let us write $x\approx y$ if $xy^{-1}\in \cO_{\mathbb{C}_p}^\times$. It follows from \cite[Theorem~6.5.9 and Remark~6.5.10(iii)]{KLZ1} (see also \cite[Theorem 3.3.4]{KLZ2}) that
\begin{align*}
\langle\log\left(\res_p( \BF_1)\right), \eta_f^{\pm\lambda}\otimes \omega_f\rangle&\approx\frac{{k \choose j}\,k!\,\mathcal{E}(f^{\lambda}){\mathcal{E}^*(f^{\lambda})}
}{\mathcal{E}(f^{\lambda},f^{\lambda}\otimes \chi^{-1},j)}L_p(f^{\lambda},f^{\lambda}\otimes\chi^{-1},j)\\
&\approx \frac{\mathcal{E}(f^{\lambda}){\mathcal{E}^*(f^{\lambda})}
L_p(f^{\lambda},f^{\lambda}\otimes\chi^{-1},j)}{\mathcal{E}(f^{\lambda},f^{\lambda}\otimes \chi^{-1},j)}\,
\end{align*}
since $p>2k+2$ and $p\nmid N_f$. This combined with \cite[Theorem 7.2.6]{KLZ1} yields (again relying on the fact that $p>2k+2$)
\begin{align*}
\langle \log\left(\res_p( \BF_1)\right),\eta_f^{\lambda}\otimes \omega_f\rangle&\approx \frac{L^\prime_{\{p\}}(f\otimes f\otimes\chi^{-1},j)}{\mathcal{E}(f^{\lambda},f^{\lambda}\otimes \chi^{-1},j)}\times \frac{\widetilde{\Omega}_p(j,\underline{v},\cN_{\lambda})}{\Omega_\infty(j)}\,.
\end{align*}
where $\widetilde{\Omega}_p(j,\underline{v},\cN_{\lambda})=\Omega_p(j, \frac{1-p^{-1}\varphi^{-1}}{1-\varphi}(-t)^{-j}\underline{v},\cN_{\lambda})$. Here, we have taken $v_1=\eta_f^{\lambda}\otimes\omega_{f\otimes\chi^{-1}}$ and $v_2=\eta_f^{\lambda}\otimes\eta_{f\otimes\chi^{-1}}$ so the $p$-adic periods here agree (up to $p$-adic units) with those appear in op. cit. multiplied by $\mathcal{E}(f^{\lambda}){\mathcal{E}^*(f^{\lambda})}$. 

The computation in \cite[\S7.2]{KLZ2} (see in particular (7.2.3) and the remark in Definition~ 7.2.5) shows that 
$$\frac{L^\prime_{\{p\}}(f\otimes f\otimes\chi^{-1},j)}{\mathcal{E}(f^{\lambda},f^{\lambda}\otimes \chi^{-1},j)}\times \frac{\widetilde{\Omega}_p(j,\underline{v},\cN_{\lambda})}{\Omega_\infty(j)}=\frac{L^\prime(f\otimes f\otimes\chi^{-1},j)}{\Omega_\infty(j)}\times \Omega_p(j,\underline{v},\cN_{\lambda})\,.$$

This implies 
\begin{align*}
\langle\log\left(\res_p( \BF_1)\right), \eta_f^{\lambda}\otimes \omega_f\rangle&\approx\frac{L^\prime(f\otimes f\otimes\chi^{-1},j)}{\Omega_\infty(j)}\times \Omega_p(j,\underline{v},\cN_{\lambda})\\
&\approx \frac{\lambda}{p^{k+1}}\times\frac{L^\prime(f\otimes f\otimes\chi^{-1},j)}{\Omega_\infty(j)}\times \Omega_p(j,\cN_{\lambda}^\circ)\,,
\end{align*}
where we recall that $\underline{v}=\{\eta_f^{\lambda}\otimes\omega_{f\otimes\chi^{-1}},\eta_f^{\lambda}\otimes\eta_{f\otimes\chi^{-1}}\}$ and the second equality follows from the description of a basis for the lattice $\cN^\circ_\lambda$  in Remark~\ref{rem_basisfortheoptimisedlattice}.

Under our running assumptions that $L^\prime(f\otimes f\otimes\chi^{-1},j)\big{/}\Omega_\infty(j)$ and $\Omega_p(j,\cN^\circ_\lambda)$ be $p$-adic units for some choice of $\lambda\in \{\pm\alpha\}$, we conclude that
$$||\langle\log\left(\res_p( \BF_1)\right), \eta_f^{\lambda}\otimes \omega_f\rangle ||_p=||\eta_f^{\lambda}\otimes \omega_f||_{{\Fil^0\Dcris(R_f\otimes R_f(j-\chi))}}\,.$$

The result follows from Proposition~\ref{prop_BFnontrivialmodpimpliestrivialselmer}.
\end{proof}

\subsection{A numerical criterion}

It is possible to check the non-vanishing of $\res_p(\overline{\BF}_1)\in H^1(\QQ_p,\overline{T})$ by evaluating the $p$-adic $L$-values  $L_p(f^{\lambda},f^{\lambda}\otimes\chi^{-1},j)$ for $\lambda\in\{\alpha,-\alpha\}$:

\begin{lemma}
\label{lem:NumericalCriterion}
Suppose that $v_p(L_p(f^{\lambda},f^\lambda\otimes\chi^{-1},j))={-5k/2+2j-7/2}$ for either $\lambda=\alpha$ or $-\alpha$.
Then $\res_p(\overline{\BF}_1)\in H^1(\QQ_p,\overline{T})$ is non-zero.
\end{lemma}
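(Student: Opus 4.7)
The plan is to apply Proposition~\ref{prop_whenisBFnontrivialmodp} with the test vector $v = \eta_f^\lambda \otimes \omega_f \in \Fil^0\Dcris(W_f \otimes W_f(j - \chi))$. By the computation carried out in the proof of Theorem~\ref{THM_nonzeromodperiodimpliesBFmodpnonzero} via the explicit reciprocity law, we already have the relation
$$\langle \log\res_p(\BF_1), \eta_f^\lambda \otimes \omega_f \rangle \approx \frac{\mathcal{E}(f^\lambda)\,\mathcal{E}^*(f^\lambda)}{\mathcal{E}(f^\lambda, f^\lambda \otimes \chi^{-1}, j)} \cdot L_p(f^\lambda, f^\lambda \otimes \chi^{-1}, j),$$
where $\approx$ denotes equality up to multiplication by an element of $\cO^\times$. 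Since the Example records $\|\eta_f^\lambda \otimes \omega_f\|_{\Fil^0\Dcris(R_f\otimes R_f(j-\chi))} = p^{(k+1)/2}$, the task reduces to verifying that the right-hand side has $p$-adic valuation exactly $-(k+1)/2$.

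Next, I would compute the valuations of the three Euler factors on the right. Since $a_p(f) = 0$, we have $\lambda^2 = -\epsilon_f(p)p^{k+1}$, whence $v_p(\lambda) = (k+1)/2$; correspondingly, the Hecke roots of $f \otimes \chi^{-1}$ are $\pm \chi^{-1}(p)\lambda$. A short calculation yields $v_p(\mathcal{E}(f^\lambda)) = v_p(1 + 1/p) = -1$ and $v_p(\mathcal{E}^*(f^\lambda)) = v_p(2) = 0$. For the Rankin--Selberg factor, the key observation is that each of the four products of a Hecke root of $f^\lambda$ with a Hecke root of $f^\lambda \otimes \chi^{-1}$ equals $\pm \chi^{-1}(p)\epsilon_f(p)\, p^{k+1}$, and in particular has valuation exactly $k+1$. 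Consequently the two ``geometric'' factors of the form $1 - p^{j-1}/(\text{root-pair})$ each contribute $v_p = j - k - 2$, while the two ``character-type'' factors of the form $1 - (\text{root-pair})/p^{j}$ remain $p$-adic units under $(\Psi_2)$. Hence $v_p\bigl(\mathcal{E}(f^\lambda, f^\lambda \otimes \chi^{-1}, j)\bigr) = 2j - 2k - 4$.

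Combining these valuations with the hypothesis $v_p(L_p(f^\lambda, f^\lambda \otimes \chi^{-1}, j)) = -5k/2 + 2j - 7/2$, the right-hand side of the displayed equation has valuation
$$-1 - (2j - 2k - 4) + (-5k/2 + 2j - 7/2) = -(k+1)/2.$$
Consequently $\|\langle \log\res_p(\BF_1), \eta_f^\lambda \otimes \omega_f \rangle\|_p = p^{(k+1)/2} = \|\eta_f^\lambda \otimes \omega_f\|_{\Fil^0\Dcris(R_f \otimes R_f(j-\chi))}$, so condition (II) of Proposition~\ref{prop_whenisBFnontrivialmodp} is satisfied and $\res_p(\overline{\BF}_1) \neq 0$ follows.

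The most delicate step will be correctly tracking the Euler factor conventions: one must match the normalization of $\mathcal{E}(f^\lambda, f^\lambda\otimes\chi^{-1}, j)$ with the one used in the explicit reciprocity law of Theorem~\ref{THM_nonzeromodperiodimpliesBFmodpnonzero}, and separately confirm that the ``character-type'' factors do not acquire extra $p$-divisibility in the boundary case $j = k+1$. Any such degeneracy is ruled out either by $(\Psi_2)$ or by the fact that the hypothesis $v_p(L_p) = -5k/2 + 2j - 7/2$ is calibrated precisely to the ``generic'' valuation bookkeeping above.
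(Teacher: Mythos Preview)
Your proposal is correct and follows essentially the same route as the paper's own proof: both arguments pair $\log\res_p(\BF_1)$ against the test vector $\eta_f^{\lambda}\otimes\omega_f$, invoke the explicit reciprocity law from \cite[Theorem~6.5.9]{KLZ1}, compute the $p$-adic valuations of the three Euler factors $\mathcal{E}(f^\lambda)$, $\mathcal{E}^*(f^\lambda)$, $\mathcal{E}(f^\lambda,f^\lambda\otimes\chi^{-1},j)$, and then conclude via Proposition~\ref{prop_whenisBFnontrivialmodp} together with Example~\ref{example}. The paper simply records the values $\mathcal{E}(f^\lambda)=1+1/p$, $\mathcal{E}^*(f^\lambda)=2$, and $\mathcal{E}(f^\lambda,f^\lambda\otimes\chi^{-1},j)\approx p^{-2(k-j+2)}$ without further comment, whereas you spell out how the four root-pair factors split into two of valuation $j-k-2$ and two units; this extra detail is helpful but not a different strategy.
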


\begin{proof}In the notation of the proof of Theorem~\ref{THM_nonzeromodperiodimpliesBFmodpnonzero}, we calculate explicitly that
\[\cE(f^{\lambda},f^{\lambda}\otimes\chi^{-1},j)\approx {p^{-2(k-j+2)}},\]
 $\cE(f^{\lambda})=1+\frac{1}{p}$ and $\cE^*(f^{\lambda})=2$. 
Recall from \cite[Theorem~6.5.9 and Remark~6.5.10(iii)]{KLZ1} that 
\[
\langle \log\left(\res_p( \BF_1)\right), \eta_f^{{\lambda}}\otimes \omega_f\rangle\approx\frac{ \cE(f^{\lambda})\cE(f^{\lambda})L_p(f^{\lambda},f^{\lambda}\otimes\chi^{-1},j)}{\cE(f^{\lambda},f^{\lambda}\otimes\chi^{-1},j)}.
\]
 The result now follows from Proposition~\ref{prop_whenisBFnontrivialmodp} and Example~\ref{example}.
\end{proof}

\begin{remark}\label{remarkatlimbo}
\item[(i)] It follows on comparing with the proof of Theorem~\ref{THM_nonzeromodperiodimpliesBFmodpnonzero}, the rather random looking numerical condition in  Lemma~\ref{lem:NumericalCriterion} hold true (for $p>>0$) if and only if $\Omega_p(j,\mathcal{N}_{\lambda}^\circ)$ is a $p$-adic unit. 
\item[(ii)]The proof of Lemma~\ref{lem:NumericalCriterion} also shows that if 
$$v_p\left(L_p(f^{\alpha},f^\alpha\otimes\chi^{-1},j)+L_p(f^{-\alpha},f^{-\alpha}\otimes\chi^{-1},j)\right)={-2k+2j-3},$$
then $\res_p(\overline{\BF}_1)\in H^1(\QQ_p,\overline{T})$ is non-zero.
\item[(iii)] {Due to the absence of cricial values, it is not possible to relate the aforementioned $p$-adic $L$-values to complex $L$-values.} In order to evaluate them numerically, one may attempt to extend the algorithms of Lauder in \cite{lauder} to our setting. According to our limited understanding on the matter, the main computational difficulty is working in an efficient manner with spaces of nearly overconvergent (but 
not necessarily overconvergent) modular forms.
\end{remark}

\section*{Acknowledgment}
We thank Alan Lauder and David Loeffler for answering our questions during the preparation of this paper. We also thank the anonymous referee for a very thorough reading of an earlier version of this article and for many useful suggestions. K.B. has received funding from the European Union's Horizon 2020 research and innovation programme under the Marie Sk\l odowska-Curie Grant Agreement No. 745691 (CriticalGZ). A.L. is supported by the NSERC Discovery Grants Program 05710.

\bibliographystyle{amsalpha}
\bibliography{references}

\providecommand{\bysame}{\leavevmode\hbox to3em{\hrulefill}\thinspace}
\providecommand{\MR}{\relax\ifhmode\unskip\space\fi MR }
\providecommand{\MRhref}[2]{%
  \href{http://www.ams.org/mathscinet-getitem?mr=#1}{#2}
}
\providecommand{\href}[2]{#2}
\begin{thebibliography}{BLLV18}

\bibitem[Ber02]{berger02}
Laurent Berger, \emph{Nombres de {T}amagawa de certaines repr\'esentations
  cristallines}, 2002, arXiv:math/0209233.

\bibitem[BLLV18]{BLLV}
K{{\^a}}z{\i}m B{\"u}y{\"u}kboduk, Antonio Lei, David Loeffler, and Guhan
  Venkat, \emph{Iwasawa theory for {R}ankin--{S}elberg products of
  $p$-non-ordinary eigenforms}, 2018, preprint, arXiv:1802.04419.

\bibitem[BLV18]{BLV}
K{{\^a}}z{\i}m B{\"u}y{\"u}kboduk, Antonio Lei, and Guhan Venkat, \emph{Iwasawa
  theory for symmetric square of non-$p$-ordinary eigenforms}, 2018, preprint,
  arXiv:1807.11517.

\bibitem[Col00]{ColmezBourbaki1998}
Pierre Colmez, \emph{Fonctions {$L$} {$p$}-adiques}, Ast\'{e}risque (2000),
  no.~266, Exp. No. 851, 3, 21--58, S\'{e}minaire Bourbaki, Vol. 1998/99.

\bibitem[FM87]{FM}
Jean-Marc Fontaine and William Messing, \emph{{$p$}-adic periods and {$p$}-adic
  \'{e}tale cohomology}, Current trends in arithmetical algebraic geometry
  ({A}rcata, {C}alif., 1985), Contemp. Math., vol.~67, Amer. Math. Soc.,
  Providence, RI, 1987, pp.~179--207.

\bibitem[Kat04]{kato04}
Kazuya Kato, \emph{{$p$}-adic {H}odge theory and values of zeta functions of
  modular forms}, Ast\'erisque (2004), no.~295, ix, 117--290, Cohomologies
  $p$-adiques et applications arithm{\'e}tiques. III.

\bibitem[KLZ15]{KLZ1}
Guido Kings, David Loeffler, and Sarah Zerbes, \emph{Rankin-{E}isenstein
  classes for modular forms}, 2015, American J. Math., to appear,
  arXiv:1501.03289.

\bibitem[KLZ17]{KLZ2}
Guido Kings, David Loeffler, and Sarah~Livia Zerbes, \emph{Rankin-{E}isenstein
  classes and explicit reciprocity laws}, Camb. J. Math. \textbf{5} (2017),
  no.~1, 1--122.

\bibitem[Lau14]{lauder}
Alan G.~B. Lauder, \emph{Efficient computation of {R}ankin {$p$}-adic
  {$L$}-functions}, Computations with modular forms, Contrib. Math. Comput.
  Sci., vol.~6, Springer, Cham, 2014, pp.~181--200.

\bibitem[LLZ14]{LLZ1}
Antonio Lei, David Loeffler, and Sarah~Livia Zerbes, \emph{Euler systems for
  {R}ankin-{S}elberg convolutions of modular forms}, Ann. of Math. (2)
  \textbf{180} (2014), no.~2, 653--771.

\bibitem[LLZ17]{LLZ-CJM}
\bysame, \emph{On the asymptotic growth of {B}loch-{K}ato-{S}hafarevich-{T}ate
  groups of modular forms over cyclotomic extensions}, Canad. J. Math.
  \textbf{69} (2017), no.~4, 826--850.

\bibitem[LZ14]{LZ-IJNT}
David Loeffler and Sarah~Livia Zerbes, \emph{Iwasawa theory and {$p$}-adic
  {$L$}-functions over {$\Bbb{Z}_p^2$}-extensions}, Int. J. Number Theory
  \textbf{10} (2014), no.~8, 2045--2095.

\bibitem[LZ15]{LZ2}
\bysame, \emph{Iwasawa theory for the symmetric square of a modular form},
  2015, \emph{ J. Reine Angew. Math.}, to appear. arXiv:1512.03678.

\bibitem[LZ16]{LZ1}
\bysame, \emph{Rankin-{E}isenstein classes in {C}oleman families}, Res. Math.
  Sci. \textbf{3} (2016), Paper No. 29, 53.

\bibitem[MR04]{mr02}
Barry Mazur and Karl Rubin, \emph{Kolyvagin systems}, Mem. Amer. Math. Soc.
  \textbf{168} (2004), no.~799, viii+96.

\bibitem[Nek06]{nekovar06}
Jan Nekov{\'a}{\v{r}}, \emph{Selmer complexes}, Ast\'erisque (2006), no.~310,
  viii+559.

\bibitem[PR98]{pr-es}
Bernadette Perrin-Riou, \emph{Syst\`emes d'{E}uler {$p$}-adiques et th\'eorie
  d'{I}wasawa}, Ann. Inst. Fourier (Grenoble) \textbf{48} (1998), no.~5,
  1231--1307.

\end{thebibliography}
\end{document}